\theoremstyle{definition}
\newtheorem{definition}{Definition}[section]
\theoremstyle{plain}
\newtheorem{lemma}[definition]{Lemma}
\newtheorem{theorem}[definition]{Theorem}
\newtheorem{proposition}[definition]{Proposition}
\newtheorem{corollary}[definition]{Corollary}
\theoremstyle{remark}
\newtheorem{notation}[definition]{Notation}
\newcommand{\mycl}{\operatorname{cl}}
\newcommand{\myrank}{\operatorname{rank}}
\newcommand{\mylen}{\operatorname{len}}
\newcommand{\myIso}{\operatorname{iso}}
\newcommand{\myLpt}{\operatorname{lpt}}
\begin{document}
\title[Definable Morse functions]{Morse functions definable in d-minimal structures}
\author[M. Fujita]{Masato Fujita}
\address{Department of Liberal Arts,
	Japan Coast Guard Academy,
	5-1 Wakaba-cho, Kure, Hiroshima 737-8512, Japan}
\email{fujita.masato.p34@kyoto-u.jp}

\begin{abstract}
	Fix a d-minimal expansion of an ordered field.
	We consider the space $\mathcal D^p(M)$ of definable $\mathcal C^p$ functions defined on a definable $\mathcal C^p$ submanifold $M$ equipped with definable $\mathcal C^p$ topology.
	The set of definable $\mathcal C^p$ Morse functions is dense in $\mathcal D^p(M)$.
\end{abstract}

\subjclass[2020]{Primary 03C64; Secondary 57R35}

\keywords{d-minimal structure; Morse function; nondegenerate function}

\maketitle	

\section{Introduction}\label{sec:intro}

Several famous facts in differential geometry such as intersection theory and imbedding theorem of manifolds still hold even if we additionally require that sets and maps are definable in o-minimal expansion of an ordered field \cite{BO, D}.
Some of these hold under more relaxed conditions such as imbedding theorems \cite{FK}. 
Readers who are not familiar with o-minimal structures should consult \cite{D, PS, KPS}.
Morse functions on stratified sets satisfying Whitney conditions (a) and (b) are investigated in Morse theory on singular spaces. 
A classical textbook for Morse theory is \cite{Milnor} and Morse theory on singular spaces is discussed in \cite{GM}.
Loi showed that Morse functions on a stratified definable set form a dense and open subset of the space of definable $\mathcal C^p$ functions endowed with the Whitney topology in o-minimal structures \cite{Loi}. 
This is a definable and stratified version of the well-known result that the set of $\mathcal C^{\infty}$ Morse functions on a compact smooth manifold is a dense and open subset of the space of $\mathcal C^{\infty}$ functions.

We expect an assertion similar to Loi's result holds under more relaxed condition.
We consider d-minimal structures \cite{Miller_dmin, Fornasiero_dmin} in place of o-minimal structures.
An expansion of a dense linear order without endpoints $\mathcal F=(F,<,\ldots)$ is \textit{definably complete} if any definable subset $X$ of $F$ has the supremum and  infimum in $F \cup \{\pm \infty\}$ \cite{M}.
The structure $\mathcal F$  is \textit{d-minimal} if it is definably complete, and every definable subset $X$ of $F$ is the union of an open set and finitely many discrete sets, where the number of discrete sets does not depend on the parameters of definition of $X$ \cite{Fornasiero_dmin}.

Definable $\mathcal C^2$ functions not having degenerate critical points are called \textit{definable nondegenerate functions} for short.
A \textit{definable Morse function} is a definable nondegenerate function with distinct critical values in this paper. 
This paper studies Morse functions definable in d-minimal structures. 
Morse functions on singular spaces are studied in Loi's work, but we consider definable $\mathcal C^p$ submanifolds instead of singular spaces because Whitney stratification is not necessarily available in d-minimal structures \cite[Example 5.39]{Fornasiero_dmin} differently from o-minimal structures \cite{Loi2}.
Our main theorem asserts that the set of definable $\mathcal C^p$ Morse functions on a definable $\mathcal C^p$ submanifold $M$ is dense in the space $\mathcal D^p(M)$ of definable $\mathcal C^p$ functions on $M$ endowed with definable $\mathcal C^p$ topology defined in \cite{E}. 
%
%This paper is organized as follows:
In Section \ref{sec:preliminary}, we recall definitions and assertions in the previous studies which are necessary in this paper.
We prove the main theorem in Section \ref{sec:proof}.

We introduce the terms and notations used in this paper. 
Throughout, we fix a definably complete expansion of an ordered field $\mathcal F$ whose universe is $F$.
$p$ is a positive integer unless specified otherwise.
The term `definable' means `definable in the given structure with parameters.'
We assume that $F$ is equipped with the order topology induced from the linear order $<$ and the topology on $F^n$ is the product topology of the order topology.
An open box in $F^n$ is the product of $n$ many nonempty open intervals.
We denote  the closure of a set $A$ by $\mycl(A)$.
In the paper, $p$ is a positive integer unless another definition of $p$ is explicitly given.
We abbreviate the words `definable $\mathcal C^p$' by $\mathcal D^p$ in this paper.
For instance, a $\mathcal D^p$ function means a definable $\mathcal C^p$ function.

\section{Preliminaries}\label{sec:preliminary}

This paper often refers to $\mathcal D^p$ submanifolds.
We can find definitions of $\mathcal D^p$ submanifolds in many papers such as \cite[Definition 3.1]{Fuji}.
They are straightforward modifications of classical definitions of submanifolds. 
%We do not repeat it here.
For readers' convenience, we recall a definition here and basic facts on them.
\begin{definition}\label{def:manifold}
	%	Let $p$ be a positive integer.
	%	Consider a definably complete expansion of an ordered field whose universe is denoted by $F$.
	A definable subset $M$ of $F^n$ of dimension $d$ is called a \textit{$\mathcal D^p$ submanifold} of dimension $d$ if, for every point $x$ in $M$, there exists a $\mathcal D^p$ diffeomorphism $\varphi:U \to V$ from a definable open neighborhood $U$ of $x$ in $F^n$ onto a definable open neighborhood $V$ of the origin in $F^n$ such that $\varphi(x)$ is the origin and the equality $\varphi(M \cap U)= V \cap (F^d \times \{\overline{0}_{n-d}\}) $ holds.
	Here, $\overline{0}_{n-d}$ denotes the origin in $F^{n-d}$.
	A definable map $f:M \to F$ defined on a $\mathcal D^p$ submanifold $M$ is \textit{of class $\mathcal C^p$} if the composition $f \circ (\varphi|_{X \cap U})^{-1}$ is of class $\mathcal C^p$. %, where $\varphi|_{X \cap U}$ denotes the restriction of the $\mathcal D^p$ diffeomorphism $\varphi:U \to V$ to $M \cap U$.
	We define $\mathcal D^p$ map between $\mathcal D^p$ submanifolds similarly.
	We can define the tangent bundle $\pi:TM \to M$ of a $\mathcal D^p$ submanifold $M$ in the classical way.
	It is known that the $TM$ is a $\mathcal D^{p-1}$ submanifold.
	See \cite[Section 2]{BO} for instance.
	The structures are assumed to be o-minimal in \cite{BO}, but assertions in  \cite[Section 2]{BO} hold for all definably complete expansions of ordered fields.
	A \textit{$\mathcal D^{p-1}$ vector field} is a $\mathcal D^{p-1}$ map $D:M \to TM$ with $\pi \circ D = \operatorname{id}$.
	
	For a given point $x$ in a $\mathcal D^p$ submanifold $M$ of dimension $d$, we can always find a definable open subset $U$ of $M$ containing the point $x$, a definable open subset $V$ of $F^d$ containing the origin and a $\mathcal D^p$ diffeomorphism $\varphi: U \to V$ such that $\varphi(x)=\overline{0}_d$.
	The pair $(U,\varphi)$ is called a \textit{coordinate neighborhood} of $M$ around $x$.
%	
%	A \textit{$\mathcal D^{p-1}$ vector field} is a $\mathcal D^{p-1}$ map $D:M \to TM$ with $\pi \circ D = \operatorname{id}$.
%	In other words, for every $x \in M$ and every coordinate neighborhood $(U,\varphi:U \to V)$ of $M$ around $x$, there exist $\mathcal D^{p-1}$ functions $\xi_1,\ldots,\xi_d:V \to F$ such that $$(Df)|_U \circ \varphi^{-1} = \sum_{i=1}^d\xi_i \cdot \frac{\partial (f \circ \varphi^{-1})}{\partial x_i}$$ for $\mathcal D^1$ function on $M$.  
\end{definition}

We have introduced basic notions.
We begin to introduce lemmas which hold in definably complete expansions of ordered fields.
%\begin{lemma}\label{lem:Morse_equiv}
%	%Consider a definably complete expansion of an ordered field $\mathcal F=(F,<,+,\cdot, 0,1,\ldots)$.
%	Let $M$ be a $\mathcal D^2$ submanifold of dimension $d$ and $\{D_1,\ldots D_d\}$ be the set of $\mathcal D^1$ vector fields on $M$ which spans the tangent space $T_xM$ of $M$ at $x$ for every $x \in M$.
%	Then a point $x$ is not a degenerate critical point of a $\mathcal D^2$ function $f:M \to F$ if and only if $\sum_{i=1}^d |D_if(x)| + |\det (D_iD_jf(x)))_{1 \leq i,j \leq d}|$ is positive.
%\end{lemma}
%\begin{proof}
%	The lemma can be proven routinely by using the Leibnitz rule.
%	We omit the proof.
%\end{proof}
The following preimage theorem holds:
\begin{lemma}\label{lem:local_submersion}
	%Consider a definably complete expansion of an ordered field whose universe is $F$.
	Let $M$ and $N$ be $\mathcal D^p$ submanifolds and $f:M \to N$ be a definable submersion; that is, $d_xf:T_xM \to T_{f(x)}N$ is surjective for every $x \in M$.
	Let $Y$ be a $\mathcal D^p$ submanifold of $N$.
	%	Fix a point $x \in M$. 
	%	Then there are definable parameterizations $\varphi$ of $M$ around $x$ and $\psi$ of $N$ around $f(x)$ such that $\psi^{-1} \circ f \circ \varphi$ is defined on a definable open neighborhood of the origin in $F^d$ and it is defined by $\psi^{-1} \circ f \circ \varphi(x_1,\ldots, x_d)=(x_1,\ldots, x_e)$.
	%	
	%	In addition, 
	Then $X:=f^{-1}(Y)$ is a definable submanifold of $M$ of dimension $=(\dim Y+\dim N-\dim M)$.
	The tangent space $T_xX$ of $X$ at $x \in X$ is the inverse image $df_x^{-1}(T_{f(x)}Y)$ of $T_{f(x)}Y$ under the linear map $d_xf:T_xM \to T_{f(x)}N$.
\end{lemma}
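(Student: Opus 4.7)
The plan is to reduce to a local statement around an arbitrary point $x\in X$ and apply a definable inverse function theorem to straighten $X$ to a coordinate subspace. Set $y:=f(x)\in Y$, $m:=\dim M$, $n:=\dim N$, and $k:=\dim Y$. Both assertions (that $X$ is a $\mathcal D^p$ submanifold and the identification of $T_xX$) are local at $x$, so it suffices to exhibit a coordinate neighborhood of $M$ around $x$ in which $X$ appears as a linear coordinate subspace.

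First I would straighten $Y$ inside $N$. Choose a coordinate neighborhood $(V,\psi)$ of $N$ around $y$ with $\psi(Y\cap V)=\psi(V)\cap (F^{k}\times\{\overline{0}_{n-k}\})$. Let $\pi\colon F^{n}\to F^{n-k}$ denote the projection onto the last $n-k$ coordinates, and set $g:=\pi\circ\psi\circ f$ on the definable open neighborhood $f^{-1}(V)$ of $x$. By construction $X\cap f^{-1}(V)=g^{-1}(\overline{0}_{n-k})$. By the chain rule, $d_xg=\pi\circ d_y\psi\circ d_xf$, which is surjective because $d_xf$ is surjective by hypothesis and $\pi\circ d_y\psi$ is a linear surjection. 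Thus $g$ is a definable $\mathcal D^p$ submersion near $x$.

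Next I would invoke the inverse function theorem for definably complete expansions of ordered fields, which holds with essentially the classical proof (as used already in the references cited for $TM$). Working inside a fixed coordinate chart of $M$ around $x$ of dimension $m$, the Jacobian matrix of $g$ has full row rank $n-k$ at $x$; I would supplement those rows to an invertible $m\times m$ matrix by picking $\mathcal D^{p}$ functions $h_1,\dots,h_{m-n+k}$ whose differentials together with the components of $d_xg$ form a basis of $T_x^{*}M$. The map $\Phi:=(g,h_1,\dots,h_{m-n+k})$ is then a $\mathcal D^p$ diffeomorphism from a definable open neighborhood of $x$ in $M$ onto an open neighborhood of the origin of $F^{n-k}\times F^{m-n+k}$, carrying $X$ onto $\{\overline{0}_{n-k}\}\times F^{m-n+k}$. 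This shows that $X$ is a $\mathcal D^p$ submanifold of the expected dimension $m-n+k=\dim M-\dim N+\dim Y$.

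The tangent space formula is then immediate from the representation $X=g^{-1}(\overline{0}_{n-k})$:
\[
T_xX=\ker(d_xg)=\ker(\pi\circ d_y\psi\circ d_xf)=(d_xf)^{-1}\bigl(\ker(\pi\circ d_y\psi)\bigr)=(d_xf)^{-1}(T_yY),
\]
where the last equality uses that $T_yY=(d_y\psi)^{-1}(F^{k}\times\{\overline{0}_{n-k}\})$ by the choice of chart. The principal obstacle is having a definable inverse function theorem available in the definably complete setting; granted that (which the preliminaries rely on implicitly), the argument is a routine adaptation of the classical submersion proof.
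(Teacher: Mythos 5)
Your proof is correct and follows exactly the route the paper indicates: the classical submersion/preimage argument carried out with a definable inverse (hence implicit) function theorem, which the paper cites from \cite[Theorem 2.1]{Fuji} and otherwise omits. Note that the dimension you obtain, $\dim M-\dim N+\dim Y$, is the correct classical count; the formula printed in the lemma statement appears to contain a sign typo.
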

\begin{proof}
	This definable version of a classical result is proven in the same manner as the original one using a definable version of implicit function theorem obtained from definable inverse function theorem in \cite[Theorem 2.1]{Fuji}.
	We omit the details of this proof.
	%The `in addition' part is routinely proved.
\end{proof}

%\begin{corollary}\label{cor:preimage}
%	%Consider a definably complete expansion of an ordered field whose universe is $F$.
%	%Let $M$ and $N$ be $\mathcal D^r$ submanifolds and $f:M \to N$ be a definable submersion.
%	Let $M$, $N$, $Y$  and $f$ be the as in Lemma \ref{lem:local_submersion}.
%	Then $X:=f^{-1}(Y)$ is a definable submanifold of $M$ of dimension $=(\dim Y+\dim N-\dim M)$.
%	In addition, the tangent space $T_xX$ of $X$ at $x \in X$ is the inverse image $df_x^{-1}(T_{f(x)}Y)$ of $T_{f(x)}Y$ under the linear map $d_xf:T_xM \to T_{f(x)}N$.
%\end{corollary}
%\begin{proof}
%	The corollary immediately follows from Lemma \ref{lem:local_submersion}.
%\end{proof}

We next define dimension of a definable set.
\begin{definition}\label{def:dim}
	%	Consider an expansion of a dense linear order without endpoints.
	%	Let $F$ be the universe.
	We consider that $F^0$ is a singleton with the trivial topology.
	Let $X$ be a nonempty definable subset of $F^n$.
	The dimension of $X$ is the maximal nonnegative integer $d$ such that $\pi(X)$ has a nonempty interior for some coordinate projection $\pi:F^n \rightarrow F^d$.
	We set $\dim(X)=-\infty$ when $X$ is an empty set.
\end{definition}
We introduced two different notion of dimensions for $\mathcal D^p$ submanifolds in Definition \ref{def:manifold} and Definition \ref{def:dim}.
These two concepts coincide in d-minimal structures \cite[Lemma 3.4]{Fuji}.
We collect basic properties of d-minimal structures from the previous studies.
\begin{proposition}\label{prop:others}
	%Consider a d-minimal expansion of an ordered field $\mathcal F=(F,<,+,\cdot,0,1,\ldots)$.
	Suppose $\mathcal F$ is d-minimal.
	The following assertions hold:
	\begin{enumerate}
		\item[(1)] Every definable closed set is the zero set of a $\mathcal D^p$ function.
		\item[(2)] Let $f:U \to F$ be a definable function.
		There exists a definable closed subset $D$ of $U$ such that $D$ has an empty interior and the restriction of $f$ to $U \setminus D$ is of class $\mathcal C^p$.
		\item[(3)] Let $f:A \to F^n$ be a definable map. Then the inequality $\dim f(A) \leq \dim A$ holds.
		\item[(4)] The equality $\dim (A \cup B)=\max\{\dim A, \dim B\}$ holds for definable sets $A$ and $B$.
		\item[(5)] The equality $\dim \mycl(A)=\dim A$ holds for a definable set $A$.
		\item[(6)] Let $f :X \to Y$ be a definable surjective map such that $\dim f^{-1}(y)$ is independent of the choice of $y \in Y$.
		We have $\dim X = \dim Y+\dim f^{-1}(y)$.
		\item[(7)] Let $\pi:F^{m+n} \rightarrow F^m$ be a coordinate projection.
		Let $X$ and $Y$ be definable subsets of $F^m$ and $F^{m+n}$, respectively,  satisfying the equality $\pi(Y)=X$.
		There exists a definable map $\varphi:X \rightarrow Y$ such that $\pi(\varphi(x))=x$ for all $x \in X$.
		\item[(8)] Let $A$ be a definable locally closed subset of $F^n$.
		Let $g:A \to F$ be a definable continuous map.
		Let $\mathcal H$ be a finite set of definable locally bounded functions $A \setminus g^{-1}(0) \to F$.
		Then there exists a definable $\mathcal C^p$ function $\phi:F \to F$ satisfying the following conditions:
		\begin{itemize}
			\item $\phi$ is odd, strictly increasing, surjective and $p$-flat at $0$;
			\item $\phi(1)=1$;
			\item $\lim_{x \to y}\phi(g(x))h(x)=0$ for each $y \in g^{-1}(0)$ and $h \in \mathcal H$.
		\end{itemize}
	\end{enumerate}
\end{proposition}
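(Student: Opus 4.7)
Since the proposition is explicitly labeled as a compilation of results ``from the previous studies,'' my plan is not to reprove anything but to assemble the appropriate citation for each item and, where a reference gives a slightly weaker statement than what is claimed, to indicate the short bridging argument. Thus the proof will be organized as eight short paragraphs, one per item, each attributing the statement to the literature on definably complete and d-minimal structures.

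For items (1) and (2), I would appeal to the $\mathcal{C}^p$-smoothness results available in d-minimal structures: (1) is the definable $\mathcal{C}^p$ zero-set lemma for closed sets (found in Miller's d-minimality paper and in Fornasiero's treatment), and (2) is generic $\mathcal{C}^p$-smoothness, which follows from the decomposition of definable functions into a definable open set on which they are $\mathcal{C}^p$ and a lower-dimensional remainder. For items (3)--(6), these are the standard dimension-theoretic properties of d-minimal structures: invariance under definable maps, additivity under finite unions, invariance under closure, and the fiber-dimension formula. Each is established in Fornasiero's paper on d-minimal structures; (5) in particular uses d-minimality (closure can change dimension in merely definably complete expansions).

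For item (7) I would cite the definable choice / definable skolemization result available in d-minimal structures, noting that in this setting one does not need a full o-minimal definable selection but only that projections admit definable sections off a small set; the claim follows from the cell-like decomposition of $Y$ fiberwise over $X$ together with a choice of a point in each fiber using definable completeness.

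The one item that may require some explicit work in the paper, rather than a single citation, is (8): the construction of an odd, strictly increasing, surjective, $p$-flat $\mathcal{C}^p$ function $\phi$ such that composition with $\phi \circ g$ kills every locally bounded $h \in \mathcal H$ near $g^{-1}(0)$. The strategy would be to use the standard trick: first, since $\mathcal H$ is finite and each $h$ is locally bounded on $A\setminus g^{-1}(0)$, the function $H(t):=\sup\{|h(x)|:x\in A\setminus g^{-1}(0),\,|g(x)|\ge t\}$ is definable and locally bounded on $F\setminus\{0\}$; then construct a $\mathcal{C}^p$ odd strictly increasing surjection $\phi$ that tends to $0$ at $0$ faster than $1/H(t)$ and vanishes to order $p$ at $0$, normalized by $\phi(1)=1$. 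Producing such a $\phi$ in the d-minimal setting is where I expect the real work to go; I would cite the construction of $p$-flat functions from Escribano's paper \cite{E} or the analogous construction in Fornasiero's paper, and then adapt it by multiplying by an explicit dominating factor built from $H$.
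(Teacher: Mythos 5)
Your proposal matches the paper's approach: the proposition is proved purely by citation, with (1) and (8) attributed to Miller--Thamrongthanyalak's zero set property paper \cite{MT_zero} (items 2.9 and 1.5 respectively), (2)--(6) to Fornasiero \cite{Fornasiero_dmin} (plus \cite{D3} for the fiber formula), and (7) to Miller's definable choice note \cite{Miller-choice}. The only divergence is that for item (8) the paper does not carry out the dominating-function construction you sketch but simply invokes \cite[1.5]{MT_zero}, which already supplies the odd, strictly increasing, surjective, $p$-flat $\phi$ with the required damping property.
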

\begin{proof}
	(1) See \cite[2.9]{MT_zero}.
	It is originally proven when $F=\mathbb R$.
	We can prove it in the same manner as the original for general $F$.
	(2) \cite[Lemma 3.14]{Fornasiero_dmin}.
	(3) \cite[Lemma 4.6]{Fornasiero_dmin}.
	(4) \cite[Lemma 4.5]{Fornasiero_dmin}.
	(5)  \cite[Theorem 3.10]{Fornasiero_dmin}.
	(6) \cite[Lemma 4.5]{Fornasiero_dmin} and \cite[Corollary 1.5(iii)]{D3}.
	(7) \cite{Miller-choice}.
	(8) \cite[1.5]{MT_zero}.
\end{proof}

\begin{corollary}\label{cor:submanifold}
	%Consider a d-minimal expansion of an ordered field $\mathcal F=(F,<,+,\cdot,0,1,\ldots)$.
	Every $\mathcal D^p$ submanifold of $F^n$ is $\mathcal D^p$ diffeomorphic to a closed $\mathcal D^p$ submanifold of $F^{n+1}$. 
\end{corollary}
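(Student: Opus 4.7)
The plan is to exhibit the required diffeomorphism by sending $M$ to the graph of a suitable definable $\mathcal C^p$ function $f\colon M\to F$ that blows up at the ``missing'' boundary of $M$.

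First I would observe that any $\mathcal D^p$ submanifold $M\subseteq F^n$ is locally closed in $F^n$: by Definition \ref{def:manifold}, each point has an open neighborhood $U$ in which $M\cap U$ is the preimage of a closed linear subspace under a diffeomorphism, hence is closed in $U$. Consequently $Z:=\mycl(M)\setminus M$ is definable and closed in $F^n$. By Proposition \ref{prop:others}(1) there exists a $\mathcal D^p$ function $g\colon F^n\to F$ with $g^{-1}(0)=Z$. Since $Z\cap M=\emptyset$, the function $g$ does not vanish on $M$, and so $f(x):=1/g(x)^2$ is a well-defined $\mathcal D^p$ function $M\to F$.

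Next I would consider the graph $\Gamma_f:=\{(x,f(x)):x\in M\}\subseteq F^{n+1}$. The map $\Phi\colon M\to\Gamma_f$, $x\mapsto(x,f(x))$, is a definable $\mathcal C^p$ bijection whose inverse is the restriction of the coordinate projection $F^{n+1}\to F^n$; this produces a $\mathcal D^p$ diffeomorphism between $M$ and $\Gamma_f$, and shows that $\Gamma_f$ is itself a $\mathcal D^p$ submanifold of $F^{n+1}$ (one transports the charts of $M$ through $\Phi$ and augments them with the graph direction).

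The main step is to verify that $\Gamma_f$ is closed in $F^{n+1}$. Suppose $(y,z)\in\mycl(\Gamma_f)$; then $y\in\mycl(M)$. If $y\in M$, then since $\Phi$ is a homeomorphism onto its image, $\Gamma_f$ is closed in the subspace $M\times F$, and standard topology gives $z=f(y)$, so $(y,z)\in\Gamma_f$. If instead $y\in Z$, then $g(y)=0$; by continuity of $g$, for every $\delta>0$ there is an open box $U\ni y$ with $|g(x)|<\delta$ for all $x\in U$, whence $f(x)=1/g(x)^2>1/\delta^2$ on $U\cap M$. Choosing $\delta$ small enough that $1/\delta^2>z+1$, the open box $U\times(z-1,z+1)$ around $(y,z)$ misses $\Gamma_f$, contradicting $(y,z)\in\mycl(\Gamma_f)$. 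Hence $\Gamma_f$ is closed.

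The only delicate point is the last paragraph, where one must rule out boundary points of $M$ in the closure of the graph without invoking sequences (which need not suffice in general definably complete fields). Using the open-box neighborhood basis of the order topology together with continuity of $g$ at points of $Z$ is enough, and this is the place where the construction $f=1/g^2$ pays off.
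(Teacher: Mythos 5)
Your proposal is correct and follows essentially the same route as the paper, which simply says to construct the diffeomorphism from Proposition \ref{prop:others}(1): you take a $\mathcal D^p$ function $g$ vanishing exactly on the frontier of the (locally closed) submanifold and embed $M$ as the graph of $1/g^2$ in $F^{n+1}$. The details you supply (local closedness of $M$, closedness of the graph via the open-box argument at frontier points) are exactly the ones the paper leaves to the reader.
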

\begin{proof}
	We can easily construct a $\mathcal D^p$ diffeomorphism using Proposition \ref{prop:others}(1).
\end{proof}

\begin{corollary}\label{cor:nondegenerate}
	The set of critical values of a given definable nondegenerate function is of dimension zero.  
\end{corollary}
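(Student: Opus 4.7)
Write $f$ for the given definable nondegenerate function on the $\mathcal D^p$ submanifold $M \subseteq F^n$, and set $C = \{x \in M : d_xf = 0\}$, a definable subset of $M$. The set of critical values is $f(C)$, so by Proposition \ref{prop:others}(3) it suffices to prove $\dim C \leq 0$. The plan is classical in spirit: first establish that nondegeneracy forces critical points to be isolated, then argue that a discrete definable set in a d-minimal structure is zero-dimensional.

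\textbf{Step 1 (critical points are isolated).} Given $x_0 \in C$, take a coordinate neighborhood $(U,\varphi)$ of $M$ around $x_0$ with $\varphi(x_0) = \overline{0}_d$, and set $g = f \circ \varphi^{-1} : V \to F$, where $V = \varphi(U) \subseteq F^d$. Then $\varphi$ identifies $C \cap U$ with the zero set of the gradient $G := \nabla g : V \to F^d$. Since $f$ is $\mathcal C^2$, the map $G$ is $\mathcal C^1$, and its Jacobian at $\overline{0}_d$ is the Hessian of $g$ at $\overline{0}_d$, which is nondegenerate by hypothesis. The definable inverse function theorem (\cite[Theorem 2.1]{Fuji}) then shows that $G$ is a local diffeomorphism at $\overline{0}_d$, and in particular $\overline{0}_d$ is the only zero of $G$ in some neighborhood. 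Thus $x_0$ is isolated in $C$.

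\textbf{Step 2 (discreteness implies $\dim C \leq 0$).} Suppose for contradiction that $\dim C \geq 1$. By Definition \ref{def:dim}, some coordinate projection $\pi : F^n \to F^d$ with $d \geq 1$ sends $C$ onto a set whose interior contains an open box $B$. Proposition \ref{prop:others}(7) provides a definable section $s : B \to C$ with $\pi \circ s = \mathrm{id}_B$; in particular $s$ is injective. By Proposition \ref{prop:others}(2), $s$ is $\mathcal C^p$ (hence continuous) on $B \setminus D$ for some definable closed $D \subseteq B$ with empty interior. Pick any $y_0 \in B \setminus D$. Continuity of $s$ at $y_0$, combined with injectivity on the infinite open set $B \setminus D$, forces every neighborhood of $s(y_0)$ in $F^n$ to contain infinitely many points of $s(B \setminus D) \subseteq C$, contradicting the isolation of $s(y_0)$ in $C$ obtained in Step 1. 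Hence $\dim C \leq 0$.

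\textbf{Main obstacle.} Step 1 is essentially an application of the inverse function theorem, as in the classical setting. The only nontrivial part is Step 2: d-minimality does not immediately give ``discrete implies zero-dimensional'', and the argument must combine the definable choice principle in Proposition \ref{prop:others}(7) with the generic $\mathcal C^p$-smoothness of Proposition \ref{prop:others}(2). Given both, the conclusion $\dim f(C) \leq \dim C \leq 0$ follows from Proposition \ref{prop:others}(3).
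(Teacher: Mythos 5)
Your proof is correct and follows essentially the same route as the paper: the paper's (omitted) argument is precisely that the critical set is discrete and that Proposition \ref{prop:others}(7),(2) yield $\dim S=0$ for a discrete definable set, followed by Proposition \ref{prop:others}(3). Your Steps 1 and 2 simply supply the details the paper leaves out.
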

\begin{proof}
	The set $S$ of critical points is discrete.
	We can prove $\dim S=0$ by using Proposition \ref{prop:others}(7),(2).
	We omit the details.
	The corollary follows from this fact and Proposition \ref{prop:others}(3).
\end{proof}

\section{Proof of main theorem}\label{sec:proof}

We begin to prove our main theorem.
%We first define multi-graphs introduced in Section \ref{sec:intro}. 
We use the following technical definition.

\begin{definition}\label{def:multi_graph}
	%Consider a definably complete expansion of an ordered field whose universe is $F$.
	Let $\pi:F^n \to F^d$ be a coordinate projection.
	A $\mathcal D^p$ submanifold $M$ of $F^n$ of dimension $d$ is called a \textit{$\mathcal D^p$ multi-valued graph (with respect to $\pi$)} if, for any $x \in M$, there exist an open box $U$ in $F^n$ containing the point $x$ and a $\mathcal D^p$ map $\tau:\pi(U) \to F^n$ such that $M \cap U=\tau(\pi(U))$ and $\pi \circ \tau$ is the identity map defined on $\pi(U)$. 
\end{definition}

The following lemma claims that a $\mathcal D^p$ submanifold is covered by finitely many $\mathcal D^p$ multi-graphs.

\begin{lemma}\label{lem:multi_graph_cover}
	%Consider a definably complete expansion of an ordered field whose universe is $F$.
	Let $M$ be a $\mathcal D^p$ submanifold $M$ of dimension $d$.
	Let $\Pi_{n,d}$ be the set of coordinate projections from $F^n$ onto $F^d$.
	Let $U_{\pi}$ be the set of points $x$ at which there exist an open box $U$ in $F^n$ containing the point $x$ and a $\mathcal D^p$ map $\tau:\pi(U) \to F^n$ such that $M \cap U=\tau(\pi(U))$ and $\pi \circ \tau$ is the identity map defined on $\pi(U)$. 
	Then $U_{\pi}$ is a $\mathcal D^p$ multi-valued graph with respect to $\pi$ and 
	$\{U_{\pi}\}_{\pi \in \Pi_{n,d}}$ is a definable open cover of $M$.
\end{lemma}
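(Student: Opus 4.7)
The plan is to first show directly that $U_\pi$ is open in $M$ and that the multi-graph property for $U_\pi$ itself follows automatically from the same witnesses, then to establish definability of $U_\pi$ via the characterization
\[
U_\pi = \{x \in M : d_x\pi|_{T_xM}\colon T_xM \to F^d\text{ is bijective}\},
\]
and finally to produce, for each $x \in M$, a coordinate projection $\pi$ with $x \in U_\pi$.

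For openness, if $x \in U_\pi$ with witnessing box $U$ and section $\tau$, then every $y \in M \cap U = \tau(\pi(U))$ also has $(U,\tau)$ as witnesses, so $M \cap U \subseteq U_\pi$. Hence $U_\pi$ is open in $M$, is a $\mathcal D^p$ submanifold of $F^n$ of dimension $d$, and it satisfies the multi-graph property via the very same pair $(U,\tau)$ because $U_\pi \cap U = M \cap U = \tau(\pi(U))$. For the characterization, the forward direction is immediate: differentiating $\pi \circ \tau = \mathrm{id}$ yields $d_{\pi(x)}\tau$ as a two-sided inverse of $d_x\pi|_{T_xM}$. For the converse, given $d_x\pi|_{T_xM}$ bijective, I would take a $\mathcal D^p$ local parametrization $\psi\colon V' \to F^n$ of $M$ around $x$ supplied by Definition \ref{def:manifold}, apply the definable inverse function theorem \cite[Theorem 2.1]{Fuji} to $\pi \circ \psi$, and set $\tau := \psi \circ (\pi \circ \psi)^{-1}$; after relabelling so that $\pi$ projects onto the first $d$ coordinates, $\tau$ has the form $y \mapsto (y,\tau'(y))$, and a product box $U = U_1 \times U_2 \ni x$ chosen small enough that $\tau'(U_1) \subseteq U_2$ and $\pi|_{M \cap U}$ is injective meets the requirements of Definition \ref{def:multi_graph}. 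Because the condition that $d_x\pi|_{T_xM}$ is bijective is equivalent to the first-order condition $T_xM \cap \ker\pi = \{0\}$ in the definable tangent bundle, $U_\pi$ is then definable.

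For the covering, given $x \in M$ take a $\mathcal D^p$ parametrization $\psi$ as above; the $n \times d$ Jacobian $d_{\psi^{-1}(x)}\psi$ has rank $d$, so some $d$-element index set $I \subseteq \{1,\ldots,n\}$ indexes an invertible $d \times d$ submatrix, and the corresponding coordinate projection $\pi_I \in \Pi_{n,d}$ satisfies $d_x\pi_I|_{T_xM}$ bijective, giving $x \in U_{\pi_I}$.

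I expect the main technical obstacle to be the converse half of the characterization: one must combine the definable inverse function theorem of \cite{Fuji} with a box-shrinking argument to refine the raw local section produced by that theorem into one whose domain matches the $\pi$-image of an open box in $F^n$ and whose image coincides on the nose with $M \cap U$. The remaining steps are elementary linear algebra and direct inspection of the definition of $U_\pi$.
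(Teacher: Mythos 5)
Your argument is correct, but it is routed differently from the paper's: the paper disposes of openness and the multi-graph property as immediate from the definition of $U_\pi$ (exactly your first paragraph) and then simply cites \cite[Lemma 3.5, Corollary 3.8]{Fuji} for the fact that the $U_\pi$ are definable and cover $M$, whereas you reprove that cited input from scratch via the characterization $U_\pi=\{x\in M: T_xM\cap\ker\pi=\{0\}\}$, the definable inverse function theorem, and the rank-$d$ Jacobian argument for the covering. What your route buys is a self-contained proof and an explicit reason why each $U_\pi$ is \emph{definable} (first-order condition on the definable tangent bundle), a point the paper leaves entirely to the reference; what it costs is that you must carry out the one genuinely delicate step you yourself flag, namely upgrading the local section produced by the inverse function theorem to a pair $(U,\tau)$ with $M\cap U=\tau(\pi(U))$ \emph{on the nose}. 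For that step it is not enough to shrink the box until $\pi|_{M\cap U}$ is injective ``near $x$''; you must also exclude other sheets of $M$ passing through the box, and the clean way to do this is to work inside the chart neighborhood supplied by Definition \ref{def:manifold}: there $M\cap U$ is closed in $U$ and equal to a single slice, so the complement in $M\cap U$ of the open graph piece $\sigma(W)$ is closed in $U$ and misses $x$, and a further shrink of the box removes it. With that addition (and the final product-box shrink $\tau'(U_1)\subseteq U_2$ you already describe), your proof is complete and correct.
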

\begin{proof}
	It is obvious that $U_{\pi}$ is a $\mathcal D^r$ multi-valued graph with respect to $\pi$.
	It is also obvious that $U_{\pi}$ is open in $M$.
	The family $\{U_{\pi}\}_{\pi \in \Pi_{n,d}}$ is a definable open cover of $M$ by \cite[Lemma 3.5, Corollary 3.8]{Fuji}.
\end{proof}

We use the following notation many times.

\begin{notation}\label{not:multi_graph}
	%Consider a definably complete expansion of an ordered field whose universe is $F$.
	Let $\pi:F^n \to F^d$ be the projection coordinates onto the first $d$ coordinates.
	Let us consider a $\mathcal D^p$ multi-valued graph $M$ with respect to $\pi$.
	Let $f:M \to F$ be a $\mathcal D^p$ function.
	Fix $1 \leq i \leq d$.
	We define a $\mathcal D^{p-1}$ function $g:M \to F$ as follows:
	We can find an open box $U$ containing the point $x$ and $\tau:\pi(U)  \to F^n$ as in Definition \ref{def:multi_graph}.
	The restriction of $g$ to $U \cap M$ is defined by $$g(x)=\dfrac{\partial (f \circ \tau)}{\partial x_i}(\pi(x)).$$
	We can easily verify that the map $g$ is well-defined.
	We denote $g$ by $\dfrac{\partial f}{\partial x_i}$ in this paper by abuse of notations.
	We put $df=\left(\dfrac{\partial f}{\partial x_1},\ldots,\dfrac{\partial f}{\partial x_d}\right)$.
	Observe that $x \in M$ is a critical point of $f$ if and only if $df(x)=0$.
\end{notation}

We use the following result without notice in this paper:
\begin{lemma}
	%Consider a definably complete expansion of an ordered field whose universe is $F$.
	Let $\pi:F^n \to F^d$ be the projection coordinates onto the first $d$ coordinates.
	Let us consider a $\mathcal D^p$ multi-valued graph $M$ with respect to $\pi$.
	Let $f:M \to F$ be a $\mathcal D^p$ function.
	The map $\dfrac{\partial f}{\partial x_k}$ is definable and of class $\mathcal C^{p-1}$ for every $1 \leq k \leq d$.
\end{lemma}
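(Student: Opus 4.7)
The plan is to establish three facts about the locally defined function $g := \partial f / \partial x_k$: independence from the choice of local parametrization (well-definedness), global definability, and regularity of class $\mathcal{C}^{p-1}$. The first and third of these follow from a single chart-wise identity. Fix $x_0 \in M$ and a local parametrization $(U_0, \tau_0)$ as in Definition \ref{def:multi_graph}. If $(U_1, \tau_1)$ is another such parametrization around $x_0$, then injectivity of $\pi|_{M \cap U_1}$ together with continuity of $\tau_0, \tau_1$ forces $\tau_0 = \tau_1$ on some neighborhood of $\pi(x_0)$, so their partial derivatives in the $x_i$-direction coincide at $\pi(x_0)$. Applied pointwise over $U_0$, this gives
\[
(g \circ \tau_0)(y) = \frac{\partial (f \circ \tau_0)}{\partial x_i}(y), \qquad y \in \pi(U_0),
\]
so $g$ is well-defined. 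The right-hand side is of class $\mathcal{C}^{p-1}$ since $f \circ \tau_0$ is of class $\mathcal{C}^p$, and hence by Definition \ref{def:manifold} the restriction $g|_{M \cap U_0}$ is of class $\mathcal{C}^{p-1}$; since $x_0$ was arbitrary, $g$ is $\mathcal{C}^{p-1}$ on all of $M$.

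For global definability, I exhibit a definable family of local parametrizations. For $x \in M$ put
\[
r(x) := \sup\bigl\{r \in (0, 1] : \pi|_{M \cap B_\infty(x, r)} \text{ is injective}\bigr\},
\]
where $B_\infty(x, r) := \prod_{j=1}^n (x_j - r, x_j + r)$. Definable completeness of $\mathcal{F}$ makes $r(x)$ definable, and $r(x) > 0$ for every $x \in M$ by the hypothesis that $M$ is a $\mathcal{D}^p$ multi-valued graph with respect to $\pi$. Setting $U(x) := B_\infty(x, r(x)/2)$ and letting $\tau_x : \pi(U(x)) \to F^n$ be the definable inverse of $\pi|_{M \cap U(x)}$, the formula
\[
g(x) = \lim_{t \to 0} \frac{f(\tau_x(\pi(x) + t e_k)) - f(x)}{t}
\]
exhibits $g$ as a definable function of $x$, the limit existing by the $\mathcal{C}^p$ regularity of $f \circ \tau_x$.

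The principal obstacle is the definability step: the multi-valued graph hypothesis only yields the existence of local parametrizations without any uniformity in $x$. Replacing the existential quantifier by the canonical neighborhood $U(x)$ resolves this, after which the remaining ingredients reduce to routine local computations.
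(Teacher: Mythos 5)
Your proof is correct, but the definability step takes a genuinely different route from the paper's. The paper also reduces the problem to producing a definable family of local parametrizations, but it does so by applying definable choice (Proposition \ref{prop:others}(7)) to the definable set of tuples $(x,t_1,\ldots,t_n)$ for which $M\cap\mathcal B(x,t_1,\ldots,t_n)$ is the graph of a $\mathcal D^p$ map over the projected box; it then packages the resulting family into implicitly defined functions $y_{d+1},\ldots,y_n$ and a composite $h(x,z)=f(z,y_{d+1}(x,z),\ldots,y_n(x,z))$, and reads off $\partial f/\partial x_k$ as $\partial h/\partial z_k$. You instead make a \emph{canonical} choice of local neighborhood via the supremum $r(x)$ of radii on which $\pi$ is injective, which requires only definable completeness rather than definable choice, and you recover the derivative directly as a definable limit of difference quotients. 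Both arguments work; yours has the mild advantage of not invoking definable choice (hence not d-minimality) at this point. Two small imprecisions worth fixing: your $\tau_x$ is really defined on $\pi(M\cap U(x))$ rather than on all of $\pi(U(x))$, since injectivity of $\pi|_{M\cap U(x)}$ alone does not give surjectivity onto the projected cube --- this is harmless because the multi-valued graph hypothesis guarantees that $\pi(M\cap U(x))$ contains a neighborhood of $\pi(x)$ on which $\tau_x$ coincides with a genuine $\mathcal C^p$ section, so the difference quotient is defined for small $t$ and converges to the intended value; and the index $i$ in your displayed chart-wise identity should be $k$.
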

\begin{proof}
	It is obvious that $\dfrac{\partial f}{\partial x_k}$ is of class $\mathcal C^{p-1}$.
	We show that $\dfrac{\partial f}{\partial x_k}$ is definable.
	For every $x \in F^n$ and $t_1,\ldots, t_n>0$, we set $\mathcal B(x,t_1,\ldots,t_n)=\{y=(y_1,\ldots, y_n)\in F^n\;|\; |x_i-y_i|<t_i \text{ for }1 \leq i \leq n\}$, where $x=(x_1,\ldots,x _n)$.
	Consider the set 
	\begin{align*}
		X=&\{(x,t_1,\ldots,t_n) \in M \times F^n\;|\; t_i>0 \text{ for every } 1 \leq i \leq n
		\text{ and the set } \\ &M \cap \mathcal B(x,t_1,\ldots,t_n) \text{ is the graph of a } \mathcal D^p \text{ map defined on } \pi(\mathcal B(x,t_1,\ldots,t_n))\}.
	\end{align*}
	Let $\Pi_1:X \to M$ and $\Pi_2:X \to F^n$ be the restrictions of the projections of $M \times F^n$ onto $M$ and $F^n$, respectively.
	The map $\Pi_1$ is surjective by the definition of multi-graphs.
	We can find a definable map $\tau:M \to X$ such that $\tau \circ \Pi_1$ is the identity map on $M$ by Proposition \ref{prop:others}(7).
	Let $\rho_i:F^n \to F$ be the projection onto the $i$-th coordinate and define the definable map $\widetilde{t_i}:M \to F$ by $\widetilde{t_i}(x)=\rho_i(\Pi_2(\tau(x)))$.
	
	We next consider the set $$Y=\bigcup_{x \in M}\{x\} \times (M \cap \mathcal B(x,\widetilde{t_1}(x), \ldots, \widetilde{t_n}(x))) \subseteq M \times F^n$$
	and $Y \langle x \rangle =\{y \in F^n\;|\; (x,y) \in Y\}=   M \cap \mathcal B(x,\widetilde{t_1}(x), \ldots, \widetilde{t_n}(x))$ for every $x \in M$.
	By the definition, $Y \langle x \rangle$ is the graph of a $\mathcal D^r$ map defined on $\pi(Y \langle x \rangle)=\pi(\mathcal B(x,\widetilde{t_1}(x), \ldots, \widetilde{t_n}(x)))$ for every $x \in M$.
	Set $$Z=\{(x,z_1,\ldots,z_d) \in M \times F^d\;|\; (z_1,\ldots, z_d) \in \pi(B(x,\widetilde{t_1}(x), \ldots, \widetilde{t_n}(x)))\}.$$
	We can uniquely find $y_{d+1}(x,z_1,\ldots,z_d),  \ldots, y_{n}(x,z_1,\ldots,z_d)$ so that $$(z_1,\ldots,z_d,y_{d+1}(x,z_1,\ldots,z_d),  \ldots, y_{n}(x,z_1,\ldots,z_d)) \in Y \langle x \rangle$$ for every $x \in M$ and $z_1,\ldots, z_d \in \pi(Y\langle x \rangle)$
	because $Y\langle x \rangle$ is a graph of definable function defined on an open box $\pi(Y\langle x \rangle)$.
	The above procedure gives definable functions $y_{d+1}, \ldots y_n:Z \to F$ which are of class $\mathcal C^p$ with respect to the coordinates $z_1,\ldots, z_d$.
	Let $h:Z \to F$ be the definable function given by $h(x,z)=f(z,y_1(x,z),\ldots, y_n(x,z))$.
	It is obvious that $\dfrac{\partial f}{\partial x_k}(x)=\dfrac{\partial h}{\partial z_k}(x,x_1,\ldots,x_d).$
	This implies that $\dfrac{\partial f}{\partial x_k}$ is definable.
\end{proof}

\begin{proposition}[Definable Sard's theorem]\label{prop:definable_Sard}
	%Consider a d-minimal expansion of an ordered field $\mathcal F=(F,<,+,\cdot, 0,1,\ldots)$.
	Suppose that $\mathcal F$ is d-minimal.
	Let $M$ be a $\mathcal D^1$ submanifold of $F^m$ of dimension $d$ and $f=(f_1,\ldots, f_n):M \to F^n$ be a $\mathcal D^1$ map.
	The set of critical values of $f$ is definable and of dimension smaller than $n$. 
\end{proposition}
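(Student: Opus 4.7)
The plan is to reduce to the case of a multi-valued graph, stratify the critical set $C$ by the rank of the Jacobian, apply a constant-rank theorem on the ``generic'' open part of each stratum, and induct on the dimension $d$ of $M$ to dispose of the residual ``rank-drop'' part.

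If $d<n$, the critical values lie in $f(M)$, which has dimension at most $d<n$ by Proposition~\ref{prop:others}(3), so there is nothing to prove. Assume $d\ge n$. Using Lemma~\ref{lem:multi_graph_cover} and Proposition~\ref{prop:others}(4), it is enough to prove the theorem under the additional assumption that $M$ is a $\mathcal D^1$ multi-valued graph with respect to the projection $\pi:F^m \to F^d$ onto the first $d$ coordinates. Then the partial derivatives $\partial f_j/\partial x_i$ from Notation~\ref{not:multi_graph} are globally defined $\mathcal D^0$ functions on $M$, and the critical set
\[
C = \{x \in M : \operatorname{rank} df(x) < n\}
\]
is definable. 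I would argue by induction on $d$, the base $d<n$ having been settled above.

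For the inductive step, stratify $C$ by the integer-valued, lower semicontinuous rank function: for $0\le k\le n-1$ set $C_k=\{x\in M:\operatorname{rank} df(x)=k\}$, and write $C_k=\Omega_k\sqcup B_k$, where $\Omega_k$ is the set of $x\in C_k$ admitting an open neighborhood in $M$ on which $\operatorname{rank} df\equiv k$. On $\Omega_k$ a definable version of the constant-rank theorem, derivable from the definable implicit function theorem of \cite[Theorem 2.1]{Fuji}, shows that each point has a neighborhood on which $f$ is, after a $\mathcal D^1$ change of coordinates, the projection onto $F^k$; hence $\dim f(\Omega_k)\le k<n$. A point of $B_k$, by contrast, lies in the topological boundary of the level set $\{\operatorname{rank} df=k\}$ in $M$, so $B_k$ has empty interior in the $d$-dimensional manifold $M$ and consequently $\dim B_k<d$.

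It then remains to bound $\dim f(B_k)$, and my plan is to obtain this from the inductive hypothesis after decomposing $B_k$ as a finite union of $\mathcal D^1$ submanifolds of dimension strictly less than $d$, together with a still lower-dimensional remainder handled by a secondary induction. The main obstacle is precisely this decomposition step: because d-minimal structures do not admit Whitney stratifications, no off-the-shelf stratification theorem is available, and one must build the required cover by hand using Proposition~\ref{prop:others}(2) (to extract a $\mathcal D^1$ portion of $B_k$ off a small closed set) together with Lemma~\ref{lem:multi_graph_cover} applied to successive coordinate projections, shrinking the bad set at each stage. Once this ingredient is established, the induction closes and the bound $\dim f(C)<n$ follows from Proposition~\ref{prop:others}(4).
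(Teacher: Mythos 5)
Your reduction to a multi-valued graph and the definability of the critical set match the paper, but the core of your argument has a genuine gap, and it is exactly the one you flag yourself: to close the induction you must write $B_k$ (and ultimately every residual piece) as a finite union of $\mathcal D^1$ submanifolds of dimension $<d$, and no such decomposition theorem is established in the paper or in the toolbox you are allowed to use. Proposition~\ref{prop:others}(2) only lets you remove a small closed set from the domain of a \emph{function} to make it $\mathcal C^1$; it does not by itself turn an arbitrary definable set of dimension $<d$ into a finite union of $\mathcal D^1$ submanifolds, and Lemma~\ref{lem:multi_graph_cover} only applies once you already have a submanifold in hand. Since the paper explicitly warns that Whitney-type stratifications fail in d-minimal structures, this missing ingredient cannot be waved through as routine; as written, the induction does not close. (The constant-rank step on $\Omega_k$ is also heavier machinery than anything the paper develops, though that part could probably be carried out.)

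The paper avoids all of this with a short contradiction argument that you should compare against your plan. Suppose $\dim\Sigma_f=n$; then $\Sigma_f$ contains a nonempty open box $U$. Definable choice (Proposition~\ref{prop:others}(7)) gives a definable section $g:U\to\Gamma_f$ with $f\circ g=\mathrm{id}_U$, and generic differentiability (Proposition~\ref{prop:others}(2)) lets one shrink $U$ so that $g$ is $\mathcal C^1$. The chain rule then yields $Jf(g(x))\cdot Jg(x)=I_n$, so $Jf(g(x))$ has rank at least $n$, contradicting $g(x)\in\Gamma_f$. This uses only the tools already listed in Proposition~\ref{prop:others} and requires no rank stratification, no constant-rank theorem, and no induction on $\dim M$. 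If you want to salvage your approach you would first have to prove a decomposition of definable sets into finitely many $\mathcal D^1$ submanifolds in the d-minimal setting, which is a substantial result in its own right; the contradiction-plus-definable-choice route is the intended (and much shorter) path.
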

\begin{proof}
	The proposition is trivial when $d<n$ by Proposition \ref{prop:others}(3).
	We consider the case in which $d \geq n$ in the rest of proof. 
	
	We denote the set of critical values of $f$ by $\Sigma_f$.
	The $\mathcal D^1$ manifold $M$ is covered by finitely many $\mathcal D^1$ multi-valued graphs $U_1,\ldots, U_k$ by Lemma \ref{lem:multi_graph_cover}.
	The equality $\Sigma_f=\bigcup_{i=1}^k \Sigma_{f|_{U_i}}$ obviously holds, where $f|_{U_I}$ is the restriction of $f$ to $U_i$.
	The set $\Sigma_f$ is definable if $ \Sigma_{f|_{U_i}}$ is definable for every $1 \leq i\leq k$.
	We have $\dim \Sigma_f=\max \{\dim \Sigma_{f|_{U_i}}\;|\;1 \leq i \leq k\}$ by Proposition \ref{prop:others}(4).
	Therefore, we may assume that $M$ is a $\mathcal D^1$ multi-valued graph with respect to a coordinate projection $\pi$.
	We may further assume that $\pi$ is the projection onto the first $d$ coordinates by permuting the coordinates if necessary.
	
	%	By the definition of $\mathcal D^1$ multi-valued graphs, for any $a \in M$, there exists a nonempty open box $B$ such that $M \cap B$ is the graph of $\mathcal D^1$ map defined on $\pi(B)$.
	%	In particular, the restriction of $\pi$ to $M \cap B$ is a $\mathcal D^1$ diffeomorphism onto $\pi(B)$.
	%	The global coordinate functions $x_1,\ldots, x_d$ are local coordinates of $M$ at $a$.
	Let $Jf:M \to F^{d \times n}$ be the map giving the Jacobian matrix $(\frac{\partial f_i}{\partial x_j})_{1 \leq i \leq n, 1 \leq j \leq d}$.
	Here, we used the notations introduced in Notation \ref{not:multi_graph}.
	Set $$\Gamma_f=\{x \in M\;|\; \operatorname{rank}(Jf(x)) < n\}.$$ 
	The set $\Gamma_f$ is definable.
	The set $\Sigma_f$ is also definable because the equality $\Sigma_f=f(\Gamma_f)$ holds.
	
	Assume for contradiction that $\dim \Sigma_f=n$.
	The definable set $\Sigma_f$ contains a nonempty open box $U$.
	We can take a definable map $g:U \to \Sigma_f$ such that $f \circ g$ is an identity map on $U$ by Proposition \ref{prop:others}(7).
	We may assume that $g$ is of class $\mathcal C^1$ by Proposition \ref{prop:others}(2) by shrinking $U$ if necessary.
	By differentiation, the matrix $Jf(g(x)) \cdot Jg(x)$ is the identity matrix of size $n$.
	It implies that $Jf(g(x))$ has rank at least $n$, which contradicts the definition of $\Gamma_f$. 
\end{proof}

\begin{corollary}\label{cor:Sard}
	Suppose $\mathcal F$ is d-minimal.
	Let $M$, $N$ and $P$ be $\mathcal D^1$ submanifolds and $f:M \times P\to N$ be a $\mathcal D^1$ map.
	If $y \in N$ is a regular value of $f$, there exists a definable subset $D$ of $P$ such that $\dim D<\dim P$ and $y$ is a regular value of $f_p$ given by $M \ni x \mapsto f_p(x):=f(x,p) \in F$.
\end{corollary}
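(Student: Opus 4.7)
The plan is to translate the problem into an application of the definable Sard's theorem (Proposition \ref{prop:definable_Sard}) to the projection of the fibre $W:=f^{-1}(y)$ onto the parameter space $P$. This is the classical transversality argument, and the only real work is in checking that each ingredient survives in the d-minimal setting.

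First I will show that $W$ is a $\mathcal D^1$ submanifold of $M\times P$ of dimension $\dim M+\dim P-\dim N$. The obstacle here is that Lemma \ref{lem:local_submersion} requires $f$ to be a global submersion, whereas we are given only that $y$ is a regular value. I will bypass this by taking $U\subseteq M\times P$ to be the definable open set on which $d_{(x,p)}f$ is surjective (an open condition readable from the Jacobian in multi-valued graph charts, as in Notation \ref{not:multi_graph}). Since $y$ is a regular value, $f^{-1}(y)\subseteq U$, and $f|_U$ is a submersion; Lemma \ref{lem:local_submersion} then gives that $W=(f|_U)^{-1}(y)$ is a $\mathcal D^1$ submanifold with $T_{(x,p)}W=\ker d_{(x,p)}f$.

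Next I will set $\pi_P\colon W\to P$, $(x,p)\mapsto p$. Covering $P$ by finitely many multi-valued graph charts (Lemma \ref{lem:multi_graph_cover}) and applying Proposition \ref{prop:definable_Sard} to the local coordinate expressions of $\pi_P$, I obtain a definable set $D\subseteq P$ containing all critical values of $\pi_P|_W$ with $\dim D<\dim P$. It remains to show that for every $p\in P\setminus D$ the point $y$ is a regular value of $f_p$. Decompose $d_{(x,p)}f(u,v)=d_x(f_p)(u)+d_2f(v)$ along $T_{(x,p)}(M\times P)=T_xM\oplus T_pP$. Then $d_{(x,p)}(\pi_P|_W)$ is surjective iff every $v\in T_pP$ admits some $u\in T_xM$ with $d_x(f_p)(u)=-d_2f(v)$, i.e.\ iff $d_2f(T_pP)\subseteq\operatorname{image}(d_x(f_p))$. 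Together with the hypothesis that $d_{(x,p)}f$ itself is surjective (so $\operatorname{image}(d_x(f_p))+d_2f(T_pP)=T_yN$), this is equivalent to $d_x(f_p)$ being surjective, i.e.\ to $x$ being a regular point of $f_p$. Hence $p$ is a regular value of $\pi_P|_W$ exactly when $y$ is a regular value of $f_p$, and this $D$ is the set we sought.

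The main obstacle is the mismatch between the global submersion hypothesis of Lemma \ref{lem:local_submersion} and our pointwise regularity hypothesis; once that is handled by restricting to the open locus $U$ where $f$ is a submersion, the linear-algebra identification of regular values and the single invocation of the definable Sard's theorem are routine.
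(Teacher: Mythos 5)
Your proposal is correct and follows essentially the same route as the paper: form $f^{-1}(y)$ as a $\mathcal D^1$ submanifold via Lemma \ref{lem:local_submersion}, apply the definable Sard theorem (Proposition \ref{prop:definable_Sard}) to the projection onto $P$, and identify critical points of that projection with points where $d_x(f_p)$ fails to be surjective by the standard linear-algebra decomposition. Your extra step of restricting to the open locus where $d_{(x,p)}f$ is surjective before invoking Lemma \ref{lem:local_submersion} is a small refinement in rigor that the paper's proof glosses over, but it does not change the argument.
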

\begin{proof}
	The corollary is obvious when $y \notin f(M \times P)$.
	We assume that $y \in f(M \times P)$.
	Since $y$ is a regular value of $f$, $f^{-1}(y)$ is a $\mathcal D^1$ submanifold of $M \times P$ by Lemma \ref{lem:local_submersion}.
	Let $\pi$ be the restriction of the projection $M \times P \to P$ to $f^{-1}(y)$.
	Let $D$ be the set of critical values of $\pi$.
	We have $\dim D<\dim P$ by Proposition \ref{prop:definable_Sard}.
	
	We show that $y$ is a regular value of $f_p$ when $p \notin D$.
	Fix $p \in P \setminus D$.
	It is obvious when $y \notin f_p(M)$.
	We take $x \in M$ such that $f_p(x)=y$.
	Since $y$ is a regular value of $f$, the map $df_{(p,x)}:T_pP \times T_xM=T_{(p,x)}(P \times M) \to T_yN$ is surjective.
	Let $w$ be an arbitrary element of $T_y(N)$.
	We can find $(u,v) \in T_pP \times T_xM$ such that $df_{(p,x)}(u,v)=w$.
	Since $y$ is a regular value of $f_p$, the map $d\pi_{(p,x)};T_{(p,x)}f^{-1}(y) \to T_pP$ is surjective.
	We can take $v' \in T_xM$ so that $(u,v') \in T_{(p,x)}f^{-1}(y)$.
	We have $df_{(p,x)}(u,v')=0$ because $f$ is constant on $f^{-1}(p)$.
	We have $d(f_p)_x(v-v')=df_{(p,x)}(0,v-v')=df_{(p,x)}((u,v)-(u',v))=df_{(p,x)}(u,v)-df_{(p,x)}(u,v')=w$.
	It means that $y$ is a regular value of $f_p$.
\end{proof}

\begin{lemma}\label{lem:Morse_basic2}
	%Consider a d-minimal expansion of an ordered field $\mathcal F=(F,<,+,\cdot, 0,1,\ldots)$.
	Suppose that $\mathcal F$ is d-minimal.
	Let $\pi$ be the coordinate projection of $F^n$ onto the first $d$ coordinates.
	Let $U$ be a $\mathcal D^2$ multi-valued graph with respect to $\pi$ and $f:F^k \times U \to F$ be a $\mathcal D^2$ function.
	The function $f_t:U \to F$ is given by $f_t(x):=f(t,x)$ for each $t \in F^k$.
	Assume that the definable map $\Phi:F^k \times U \to F^d \times F^d$ given by $\Phi(t,x)=(df_t(x),\pi(x))$ is a submersion, where $df_t$ is the map defined in Notation \ref{not:multi_graph}.
	Set $$Z=\{t \in F^k\;|\; f_t \text{ has a degenerate critical point}\}.$$
	Then the inequality $\dim Z<k$ holds.
\end{lemma}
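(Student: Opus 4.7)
The plan is to recognize $Z$ as the set of critical values of a projection to which the definable Sard theorem (Proposition \ref{prop:definable_Sard}) directly applies. Write $\Psi : F^k \times U \to F^d$ for $\Psi(t,x) = df_t(x)$, so $\Phi = (\Psi,\pi)$. Because $U$ is a $\mathcal D^2$ multi-valued graph with respect to $\pi$, the restriction $\pi|_U : U \to F^d$ is a local $\mathcal D^2$ diffeomorphism, so it induces an isomorphism $T_xU \to F^d$ at every $x \in U$. A short linear-algebra check then shows that $\Phi$ being a submersion at $(t,x)$ forces the partial differential $\partial_t\Psi(t,x) : F^k \to F^d$ alone to be surjective: indeed, in $d\Phi(\xi,v) = (\partial_t\Psi\cdot\xi + d_x\Psi\cdot v,\,d\pi\cdot v)$, the component $v\in T_xU$ is fixed by the second entry via the isomorphism $d\pi$, leaving $\partial_t\Psi$ to absorb the first entry. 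In particular $\Psi$ is a $\mathcal D^1$ submersion.

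Next I would apply Lemma \ref{lem:local_submersion} with $N = F^d$ and $Y = \{0\}$ to conclude that $W := \Psi^{-1}(0)$ is a $\mathcal D^1$ submanifold of $F^k \times U$ of dimension $k$, with
\[
  T_{(t,x)}W \;=\; \ker d\Psi_{(t,x)} \;=\; \bigl\{(\xi,v)\in F^k\times T_xU : \partial_t\Psi\cdot\xi + d_x\Psi\cdot v = 0\bigr\}.
\]
Let $\rho : W \to F^k$ be the $\mathcal D^1$ projection $(t,x)\mapsto t$. By the very definition of $W$, a point $(t,x)\in W$ corresponds to a critical point $x$ of $f_t$, and the membership $t \in Z$ is equivalent to the existence of some $(t,x)\in W$ at which the Hessian of $f_t$ at $x$ is singular.

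The key identification is the following: $(t,x)\in W$ is a critical point of $\rho$ if and only if $x$ is a degenerate critical point of $f_t$. To see this, note that $d\rho|_{T_{(t,x)}W}$ hits $\xi\in F^k$ exactly when the equation $d_x\Psi(t,x)\cdot v = -\partial_t\Psi(t,x)\cdot\xi$ has a solution $v\in T_xU$. Surjectivity of $\partial_t\Psi$ (established in the first step) makes the right-hand side sweep out all of $F^d$ as $\xi$ varies, so $d\rho|_{T_{(t,x)}W}$ is onto $F^k$ precisely when $d_x\Psi(t,x) : T_xU \to F^d$ is onto. Read in $\pi$-coordinates on $U$, this last map is exactly the Hessian matrix of $f_t$ at $x$, and surjectivity is the same as nondegeneracy since the dimensions agree. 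Consequently $Z = \rho(\{\text{critical points of }\rho\})$ is the set of critical values of $\rho$, and Proposition \ref{prop:definable_Sard} gives $\dim Z < k$.

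The main obstacle I anticipate is the identification in the previous paragraph: one has to use the strong form $\partial_t\Psi$ surjective (not merely $d\Psi$ surjective), because otherwise the image of $-\partial_t\Psi\cdot\xi$ would be a proper subspace of $F^d$ and surjectivity of $d\rho$ would not translate cleanly into nondegeneracy of the Hessian. This is precisely the content encoded in the hypothesis that the augmented map $\Phi = (\Psi,\pi)$ — rather than just $\Psi$ — is a submersion, and it is the reason the lemma is stated with that particular hypothesis.
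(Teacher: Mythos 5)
Your proposal is correct and takes essentially the same route as the paper: both identify $Z$ with the set of critical values of the projection from the critical locus $\{(t,x)\;:\;df_t(x)=0\}$ (the paper's $\Phi^{-1}(U_0)$ is exactly your $\Psi^{-1}(0)$) to $F^k$, show by the same linear algebra that its critical points are precisely the pairs $(t,x)$ with $x$ a degenerate critical point of $f_t$, and conclude by Proposition \ref{prop:definable_Sard}. The only difference is presentational: you extract the surjectivity of $\partial_t\Psi$ up front and argue invariantly, whereas the paper composes with a local chart and reads the same facts off the block matrix $\left(\begin{smallmatrix} * & H \\ 0 & I_d \end{smallmatrix}\right)$.
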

\begin{proof}
	Let $\Pi:F^k \times U \to F^k$ be the canonical projection.
	Consider the zero section $U_0=\{0\} \times F^d$ in $F^d \times F^d$.
	We fix an arbitrary point $u_0 \in U$.
	If we take a sufficiently small open box $B$ containing the point $x_0$, the restriction $\pi|_{B \cap U}$ of $\pi$ to $B \cap U$ is a $\mathcal D^2$ diffeomorphism onto $\pi(B)$.
	Let $\varphi=(\varphi_1,\ldots, \varphi_n):\pi(B) \to F^n$ be the inverse of $\pi|_{B \cap U}$.
	Observe that $\varphi_i$ is the coordinate projection onto the $i$-th coordinate for each $1 \leq i \leq d$.
	
	The inverse image $\Phi^{-1}(U_0)$ is a $\mathcal D^1$ manifold of $F^k \times U$ of  dimension $k$ by Lemma \ref{lem:local_submersion}.
	In addition, the tangent space $T_{(t_0,x_0)}\Phi^{-1}(U_0)$ is the inverse image of $\{0\} \times F^d$ under the differential $d\Phi_{(t_0,x_0)}:F^k \times T_{x_0}U \to F^d \times F^d$ under the identifications of $T_{t_0}F^k$ and $T_tF^d$ with $F^k$ and $F^d$, respectively.
	We want to show that $t \in Z$ if and only if $t$ is a critical value of the restriction of $\Pi$ to  $\Phi^{-1}(U_0)$.
	If this claim holds, we have $\dim Z<k$ by Proposition \ref{prop:definable_Sard}.
	
	Observe that $ d(\text{id} \times \varphi)_{(t_0,\pi(x_0))}:F^k \times F^d \to F^k \times T_{x_0}U$ is a linear isomorphism because $\text{id} \times \varphi$ is a $\mathcal D^2$ diffeomorphism.
	The differential $ d(\Phi \circ (\text{id} \times \varphi))_{(t_0,\pi(x_0))}=d\Phi_{(t_0,x_0)} \cdot d(\text{id} \times \varphi)_{(t_0,\pi(x_0))} $ is given by the matrix of the form
	\begin{equation*}
		\left(
		\begin{array}{cc}
			* & H(t_0,x_0)\\
			0_{d,k} & I_d
		\end{array}
		\right),
	\end{equation*}
	where $0_{d,k}$ denotes the zero matrix of size $d \times k$ and $I_d$ is the identity matrix of size $d \times d$.
	Here, $H(t_0,x_0)$ denotes the matrix of size $d \times d$ given by
	$\left(
	\dfrac{\partial^2 f_{t_0}}{\partial x_i \partial x_j}(x_0)
	\right)_{1 \leq i,j \leq d}$
	% \begin{equation*}
		% 	\left(
		% 	\begin{array}{ccc}
			% 		\dfrac{\partial^2 f_{t_0}}{\partial x_1^2} (x_0)& \cdots & \dfrac{\partial^2 f_{t_0}}{\partial x_1 \partial x_d}(x_0)\\
			% 		\vdots  & \ddots & \vdots\\
			% 		 \dfrac{\partial^2 f_{t_0}}{\partial x_d \partial x_1}(x_0) & \cdots & \dfrac{\partial^2 f_{t_0}}{\partial x_d^2}(x_0)
			% 	\end{array}
		% 	\right)
		% \end{equation*}
	under the notation in Notation \ref{not:multi_graph}.
	The inverse image $$L:=(d(\text{id} \times \varphi)_{(t_0,\pi(x_0))})^{-1} (T_{(t_0,x_0)}\Phi^{-1}(U_0))$$ is the kernel of the matrix 
	\begin{equation*}
		\left(
		\begin{array}{cc}
			* & H(t_0,x_0)
		\end{array}
		\right)
	\end{equation*}
	and it is of dimension $k$.
	Let $\Pi'$ be the restriction of $\Pi$ to $\Phi^{-1}(U_0)$.
	Set $V=(\text{id} \times \varphi)^{-1}(\Phi^{-1}(U_0))$.
	We can identify $d(\Pi' \circ (\text{id} \times \varphi)|_V)_{(t_0,\pi(x_0))}=d\Pi'_{(t_0,x_0)} \cdot d(\text{id} \times \varphi)_{(t_0,\pi(x_0))}|_L$ with the restriction $q:L \to F^k$ of the projection $F^k \times F^d \to F^k$ to $L$ under the natural identification of $T_{x_0}\pi(B)$ with $F^d$.
	Therefore, $(t_0,x_0)$ is a critical point of $\Pi'$ if and only if $H(t_0,x_0)$ is not invertible.
	This implies that $t_0 \in Z$ if and only if $t_0$ is a critical value of $\Pi'$.
\end{proof}

%We prove two lemmas in the same manner as \cite{Loi}.
%The first one is a counterpart of Theorem \cite[Theorem 2]{Loi}.
%The second lemma is a key lemma.
\begin{lemma}\label{lem:Morse_basic}
	%Consider a d-minimal expansion of an ordered field $\mathcal F=(F,<,+,\cdot, 0,1,\ldots)$.
	Suppose that $\mathcal F$ is d-minimal.
	Let $\pi$ be the coordinate projection of $F^n$ onto the first $d$ coordinates.
	Let $U$ be a $\mathcal D^2$ multi-valued graph with respect to $\pi$ and $f,\varphi:U \to F$ be $\mathcal D^2$ functions.
	Assume further that $\varphi$ is nonzero everywhere.
	Let $D$ be a definable subset of $F$ of dimension $\leq 0$.
	We can find $a_0,\ldots, a_d \in F$ such that the definable function $\Phi:U \to F$ given by  $\Phi(x_1,\ldots,x_n) =f(x_1,\ldots,x_n)+a_0\varphi(x)+\sum_{i=1}^d a_ix_i\varphi(x)$ is a Morse function, every element in $D$ is a regular value of $\Phi$ and $|a_0|,\ldots,|a_d|$ are sufficiently small.
\end{lemma}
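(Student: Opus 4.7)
The plan is to show that the set of $a = (a_0, \ldots, a_d) \in F^{d+1}$ for which $\Phi_a(x) := f(x) + a_0\varphi(x) + \sum_{i=1}^d a_i x_i \varphi(x)$ fails either to be a Morse function or to have every element of $D$ as a regular value is a definable subset of dimension strictly less than $d+1$. Once this is established, the bad set has empty interior in $F^{d+1}$ by Definition \ref{def:dim}, so every open box around the origin meets its complement, providing $a$ with all $|a_j|$ arbitrarily small.

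First, to ensure nondegeneracy, I would apply Lemma \ref{lem:Morse_basic2} with $k = d+1$. For its submersion hypothesis on $(a,x)\mapsto(d_x\Phi_a(x),\pi(x))$, the $x$-variation trivially covers the $\pi$-component through the isomorphism $d\pi\colon T_xU\to F^d$, so it suffices to check that the $a$-derivatives of $d_x\Phi_a$ span $F^d$. These derivatives form a $d\times(d+1)$ matrix whose column from $a_0$ is $d\varphi(x)$ and whose column from $a_j$ ($j \ge 1$) is $x_j\,d\varphi(x)+\varphi(x)e_j$; a short kernel computation using $\varphi(x)\ne 0$ shows its kernel is one-dimensional, so the matrix has rank $d$. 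Lemma \ref{lem:Morse_basic2} then gives $\dim Z_1 < d+1$ for $Z_1 := \{a : \Phi_a\text{ has a degenerate critical point}\}$.

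To handle $D$, let $V := \{(a,x)\in F^{d+1}\times U : d\Phi_a(x) = 0\}$; by Lemma \ref{lem:local_submersion} this is a $\mathcal D^1$ submanifold of dimension $d+1$. On $V$, the evaluation $\eta(a,x) := \Phi_a(x)$ has tangent derivative $\bigl(\varphi(x),x_1\varphi(x),\ldots,x_d\varphi(x)\bigr)\cdot\dot a$, since the $\dot x$-contribution vanishes by criticality; this is nonzero because $\varphi\ne 0$, so $\eta$ is a submersion. Each fiber $\eta^{-1}(c)$ therefore has dimension $d$, and combining $\dim D\le 0$ with Proposition \ref{prop:others}(6) yields $\dim\eta^{-1}(D) \le d$; hence $Z_2 := \pi_a(\eta^{-1}(D))$ has dimension $\le d$ as well.

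The main obstacle is the distinct-critical-values condition. I would restrict attention to $V^0 := V \setminus \pi_a^{-1}(Z_1)$, on which $\Phi_a$ is nondegenerate and critical points are discrete, and analyze the pair space $W := \{(a,x,y)\in V^0 \times_{F^{d+1}} V^0 : x\ne y,\ \Phi_a(x)=\Phi_a(y)\}$, aiming for $\dim\pi_a(W) < d+1$. On the open part where $\pi(x)\ne\pi(y)$, the combined map $(a,x,y)\mapsto\bigl(d\Phi_a(x),d\Phi_a(y),\Phi_a(x)-\Phi_a(y)\bigr)$ is a submersion at its zero set: invertible Hessians handle the first two components via $x,y$-variations, while the coefficient vector $\bigl(\varphi(x)-\varphi(y),\,x_1\varphi(x)-y_1\varphi(y),\ldots,x_d\varphi(x)-y_d\varphi(y)\bigr)$ of the $a$-variation in the third component is nonzero because $\varphi\ne 0$ and $\pi(x)\ne\pi(y)$. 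A dimension count bounds this part of $W$ by $d$. On the residual part $\{\pi(x)=\pi(y),\ x\ne y\}$, the base $R := \{(x,y)\in U\times U : \pi(x)=\pi(y),\ x\ne y\}$ has dimension $\le d$ since the fibers of $\pi|_U$ are discrete; for each such $(x,y)$ the $2d$ linear equations $d\Phi_a(x)=d\Phi_a(y)=0$ in $a \in F^{d+1}$ cut out an affine subspace of dimension $\le 1$, and a stratified fiber analysis---separating the generic $(x,y)$ where the two rank-$d$ systems pin $a$ down to a $0$-dimensional set from the lower-dimensional exceptional locus where they coincide, then imposing $\Phi_a(x)=\Phi_a(y)$ as a further nontrivial linear constraint---keeps the total contribution $\le d$. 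Thus $\dim Z_3 \le d$, so $Z_1 \cup Z_2 \cup Z_3$ has dimension $< d+1$, completing the plan.
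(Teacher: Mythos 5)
Your overall strategy --- showing that the set of bad parameters $a\in F^{d+1}$ is definable of dimension $<d+1$, hence misses points arbitrarily close to the origin --- is exactly the paper's. Your treatment of nondegeneracy (the rank-$d$ computation for the $a$-derivatives of $d\Phi_a$, feeding into Lemma \ref{lem:Morse_basic2}) coincides with the paper's verification that $(t,x)\mapsto(d\Psi_t(x),\pi(x))$ is a submersion. For the condition on $D$ the paper instead applies Corollary \ref{cor:Sard} to $\Theta(t,x)=(\Psi(t,x),d\Psi_t(x))$; your route through the critical-point manifold $V$ and the evaluation map $\eta$ is essentially equivalent, though your assertion that $\eta|_V$ is a submersion needs one more line: the functional $(\varphi,x_1\varphi,\dots,x_d\varphi)$ being nonzero is not enough, you must check it does not annihilate the projection of $T_{(a,x)}V$ to the $a$-coordinates (it doesn't --- it pairs to $s\varphi(x)\neq 0$ with the kernel vector of the $a$-derivative matrix --- but this is where the content is, especially when the Hessian at $x$ is singular).

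The genuine gap is in the distinct-critical-values step, precisely on the residual locus $\pi(x)=\pi(y)$, $x\neq y$ that you correctly split off. (The paper hides this locus by asserting that $J\Pi$ has rank $2d+1$ ``everywhere''; that assertion fails exactly there, since the $a$-gradient of $\Psi(t,x)-\Psi(t,y)$ is $(\varphi(x)-\varphi(y),x_1\varphi(x)-y_1\varphi(y),\dots)$, which vanishes when $\pi(x)=\pi(y)$ and $\varphi(x)=\varphi(y)$. So you have located the real difficulty --- but your ``stratified fiber analysis'' does not close it.) Concretely: when the two affine-linear systems $d\Phi_a(x)=0$ and $d\Phi_a(y)=0$ coincide and the constraint $\Phi_a(x)=\Phi_a(y)$ is identically satisfied, the fiber of bad $a$'s over $(x,y)$ has dimension $d$, not $\leq 1$, and the ``further nontrivial linear constraint'' you invoke is trivial. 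Take $n=2$, $d=1$, $U=F\times\{0,1\}$, $f(x_1,x_2)=x_1^2$, $\varphi\equiv 1$: for every $(a_0,a_1)$ the function $\Phi=x_1^2+a_0+a_1x_1$ has the two critical points $(-a_1/2,0)$ and $(-a_1/2,1)$ with the same critical value, so $Z_3=F^2$ and no admissible choice of $a$ exists. This shows the bound $\dim Z_3\leq d$ cannot be proved by any fiber count --- the statement itself fails for multi-valued graphs with two sheets on which $f$ and $\varphi$ agree --- so the step would have to be repaired by strengthening the hypotheses (e.g.\ injectivity of $\pi|_U$, or a perturbation term that separates the fibers of $\pi|_U$) rather than by a finer dimension argument.
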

\begin{proof}
	Consider the map $\Psi:F^{d+1}\times U \to F$ defined by $\Psi(t,x)=f(x)+t_0\varphi(x)+\sum_{i=1}^d t_ix_i\varphi(x)$.
	Let $\Psi_t:U \to F$ be the function defined by $\Psi_t(x)=\Psi(t,x)$ for each $t \in F^{n+1}$.
	We have $d\Psi_t=(g_1(x,t),\ldots, g_d(x,t)),$ where $$g_j(x,t)=\dfrac{\partial f}{\partial x_j}+t_0\dfrac{\partial \varphi}{\partial x_j}+t_j\varphi(x)+\sum_{k=1}^dt_kx_k\dfrac{\partial \varphi}{\partial x_j}$$
	under the notation in Notation \ref{not:multi_graph}.
	We first verify that the definable map $\widetilde{\Psi}:(t,x) \mapsto (d\Psi_t(x),\pi(x))$ is a submersion. 
	The Jacobian $J\widetilde{\Psi}$ of $\widetilde{\Psi}$ is given by
	\begin{equation*}
		\left(
		\begin{array}{cccccc}
			\frac{\partial\varphi}{\partial x_1} & \varphi+x_1\frac{\partial\varphi}{\partial x_1} & x_2\frac{\partial\varphi}{\partial x_1} & \cdots & x_n\frac{\partial\varphi}{\partial x_1} & * \\
			\frac{\partial\varphi}{\partial x_2} & x_1 \frac{\partial\varphi}{\partial x_2} & \varphi +x_2\frac{\partial\varphi}{\partial x_2} & \cdots & x_n\frac{\partial\varphi}{\partial x_2}  & *\\
			\vdots & \vdots & \vdots & \ddots & \vdots  & \vdots\\
			\frac{\partial\varphi}{\partial x_d} & x_1\frac{\partial\varphi}{\partial x_d} & x_2\frac{\partial\varphi}{\partial x_d} & \cdots & \varphi +x_d\frac{\partial\varphi}{\partial x_d}  & *\\
			0 & 0 & 0 & \cdots & 0 & I_d
		\end{array}
		\right).
	\end{equation*}
	%Here, we used the notation given in Notation \ref{not:multi_graph}.
	It is easy to show that $J\widetilde{\Psi}$ is of rank $2d$.
	Therefore, $\widetilde{\Psi}$ is a submersion.
	Lemma \ref{lem:Morse_basic2} implies that $Z=\{t \in F^{d+1}\;|\; \Psi_t \text{ has a degenerate critical point}\}$ is of dimension smaller than $d+1$.
	%This means that $Z$ has an empty interior.
	
	We next consider the map $\Pi:(F^{d+1} \setminus \mycl(Z))\times (U \times U \setminus \Delta_U) \to F \times F^d \times F^d$ given by $\Pi(t,x,y)=(\Psi(t,x)-\Psi(t,y), d\Psi_t(x), d\Psi_t(y))$, where $\Delta_U=\{(x,x) \in F^n \times F^n\;|\; x \in U\}$.
	The Jacobian $J\Pi$ of $\Pi$  is of rank $(2d+1)$ everywhere in the domain of $\Pi$ in the same manner as above.
	In particular, the origin $0$ is a regular value of $\Pi$.
	We can take a definable subset $Z'$ of $F^{d+1} \setminus \mycl(Z)$ of dimension $<d+1$ such that $0$ is a regular value of $\Pi_t$ for every  $t \in F^{d+1} \setminus (\mycl(Z) \cup Z')$ by Corollary \ref{cor:Sard}.
	Fix $t \in F^{d+1} \setminus (\mycl(Z) \cup Z')$.
	We have $0 \notin \Pi_t(U \times U \setminus \Delta_U)$ because the codomain of $\Pi_t$ is of dimension larger than the dimension of the domain of $\Pi_t$.
	This implies that $\Psi_t$ has only critical points with distinct critical values.
	
	We consider the map $\Theta:F^{d+1} \times U \to F \times F^d$ given by $\Theta(t,x)=(\Psi(t,x),d\Psi_t(x))$.
	The Jacobian $J\Theta$ is of rank $(d+1)$ everywhere.
	For every $z \in D$, we set $C_z=\{t \in F^{d+1}\;|\; (z,0) \text{ is a critical value of } \Theta_t\}$.
	We have $\dim C_z<d+1$ by Corollary \ref{cor:Sard}.
	We obtain $(z,0) \notin \Theta_t(U)$ for $t \in F^{d+1} \setminus C_z$.
	This means that $z$ is a regular value of $\Psi_t$. 
	Set $C=\bigcup_{z \in D}C_z$.
	We have $\dim C<d+1$ by Proposition \ref{prop:others}(3),(6).
	
	We have $\dim (\mycl(Z) \cup Z' \cup C)<d+1$ by Proposition \ref{prop:others}(4),(5).
	We can choose $(a_0,\ldots,a_d) \in F^{d+1} \setminus (\mycl(Z) \cup Z' \cup C)$ sufficiently close to the origin.
\end{proof}

\begin{lemma}\label{lem:basic_func}
	%Consider a definably complete expansion of an ordered field $\mathcal F=(F,<,+,\cdot,0,1,\ldots)$.
	There exists a definable $\mathcal C^p$ function $P:F \to F$ satisfying the following conditions:
	\begin{itemize}
		\item $P(x)=0$ for every $x \in (-\infty,-1] \cup [1,\infty)$.
		\item $P(x)=1$ for every $x \in [-1/2,1/2]$.
		\item $P$ is nondecreasing on $(\infty,0]$ and nonincreasing on $[0,\infty)$.
	\end{itemize}
\end{lemma}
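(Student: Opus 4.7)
The plan is to assemble $P$ from a standard bump-function recipe, using Proposition~\ref{prop:others}(8) as a substitute for the smooth building block $e^{-1/x}$ available over $\mathbb{R}$. Concretely, I first apply Proposition~\ref{prop:others}(8) (with any input allowed by the statement, e.g.\ $A=F$, $g=\mathrm{id}_F$, $\mathcal{H}=\emptyset$) to obtain a definable $\mathcal{C}^p$ function $\phi:F\to F$ that is odd, strictly increasing, surjective, $p$-flat at $0$, and satisfies $\phi(1)=1$. I then define $\rho:F\to F$ by $\rho(x)=\phi(x)$ for $x\geq 0$ and $\rho(x)=0$ for $x<0$. Because $\phi$ is $p$-flat at $0$, all derivatives of the two branches agree to order $p$ at the origin, so $\rho$ is definable and of class $\mathcal{C}^p$; it is nondecreasing, with $\rho(x)>0$ precisely when $x>0$.

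Next I set $\sigma(x)=\rho(x)/(\rho(x)+\rho(1-x))$. For any $x\in F$ either $x>0$ or $1-x>0$, so the denominator is everywhere positive and $\sigma$ is a definable $\mathcal{C}^p$ function. From $\phi(1)=1$ one reads $\sigma(x)=0$ for $x\leq 0$ and $\sigma(x)=1$ for $x\geq 1$, and a short computation of $\sigma'$ on $(0,1)$ shows it is positive (both numerator terms $\phi'(x)\phi(1-x)$ and $\phi(x)\phi'(1-x)$ are positive there), so $\sigma$ is nondecreasing on $F$ with values in $[0,1]$.

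Finally I define $P(x)=\sigma(2x+2)\,\sigma(2-2x)$; this is a definable $\mathcal{C}^p$ function. The three required properties follow by case analysis on the signs of $2x+2$ and $2-2x$: for $x\leq -1$ the first factor vanishes and for $x\geq 1$ the second, so $P=0$ outside $[-1,1]$; for $|x|\leq 1/2$ both arguments are $\geq 1$, so both factors equal $1$ and $P=1$; on $(-\infty,0]$ the second factor is identically $1$ (its argument is $\geq 2$) while the first is a nondecreasing affine rescaling of $\sigma$, and symmetrically on $[0,\infty)$, yielding the required monotonicity.

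There is no serious obstacle; the one mildly delicate point is that $\rho$ must glue to class $\mathcal{C}^p$ across $0$, which is precisely what the $p$-flatness clause of Proposition~\ref{prop:others}(8) provides.
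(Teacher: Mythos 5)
Your construction is correct and is the standard bump-function recipe: a one-sided flat nonnegative function $\rho$, the quotient $\sigma(x)=\rho(x)/(\rho(x)+\rho(1-x))$, then a product of two affine reparametrizations of $\sigma$; all the verifications you sketch go through (two cosmetic slips: the numerator terms of $\sigma'$ are only nonnegative, not positive, since $\phi'$ vanishes at $0$ and possibly elsewhere on $(0,1)$ --- nonnegativity is all you need for monotonicity --- and the identities $\sigma\equiv 0$ on $(-\infty,0]$ and $\sigma\equiv 1$ on $[1,\infty)$ do not actually use $\phi(1)=1$). The paper omits the proof, remarking only that $P$ can be taken semialgebraic; the intended building block there is the polynomial truncation $\rho(x)=\max\{x,0\}^{p+1}$, which is semialgebraic and of class $\mathcal C^p$, rather than the function supplied by Proposition~\ref{prop:others}(8). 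The difference matters in two small ways. First, Proposition~\ref{prop:others}(8) is stated under the hypothesis that $\mathcal F$ is d-minimal, whereas Lemma~\ref{lem:basic_func} carries no such hypothesis and is asserted for the ambient definably complete expansion of an ordered field; your argument therefore proves the lemma only in the d-minimal case. This is harmless for the paper, since the lemma is invoked only inside Lemma~\ref{lem:bounded_below2}, where d-minimality is assumed, but the semialgebraic choice avoids the restriction and is strictly more elementary. Second, your $P$ need not be semialgebraic, so the parenthetical claim in the paper's proof would not follow from your construction. In short: correct, but you reached for heavy machinery ($p$-flatness via the zero-set results) where $x\mapsto\max\{x,0\}^{p+1}$ already does the job.
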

\begin{proof}
	The proof is a routine. In fact, we can take $P$ so that $P$ is semialgebraic.
	We omit the proof.
\end{proof}

Lemma \ref{lem:zero_appro} asserts that the existence of `very small' definable function.
In o-minimal structures, the existence of such functions follow from the more general approximation theorem by Escribano \cite{E}.
However, such an approximation theorem is not found in d-minimal setting.
We prepare  two lemmas for the proof of Lemma \ref{lem:zero_appro}.

\begin{lemma}\label{lem:bounded_below}
	%Consider a definably complete structure $\mathcal F=(F,<,\ldots)$.
	Let $X$ be a bounded and closed definable set.
	Let $\psi:X \rightarrow F$ be a positive definable function which is locally bounded from below by positive constants; that is, for any $x \in X$, there exist an open subset $U_x$ of $X$ and $c_x>0$ such that $\psi>c_x$ on $U_x$.
	There exists a positive $c$ such that $\psi(x) > c$ for all $x \in X$.
\end{lemma}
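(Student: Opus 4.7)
My plan is to argue by contradiction and manufacture a cluster point of a definable selection, then contradict the local lower bound there. Suppose no such $c>0$ exists, so the set $\{x \in X : \psi(x) < r\}$ is nonempty for every $r>0$. Definable choice (Proposition \ref{prop:others}(7)) produces a definable map $s : F_{>0} \to X$ with $\psi(s(r)) < r$. The aim is to find $p \in X$ such that every open box around $p$ contains $s(r)$ for arbitrarily small $r > 0$; combined with the local lower bound of $\psi$ near $p$ and the inequality $\psi(s(r)) < r \to 0$, this will yield the desired contradiction.

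To build $p$, write $s = (s_1, \ldots, s_n)$. Since $X$ is bounded, each coordinate $s_i$ is bounded, and definable completeness delivers $p_1 := \liminf_{r \to 0^+} s_1(r) \in F$; by definition of $\liminf$, the set $\{r > 0 : |s_1(r) - p_1| < \delta\}$ has infimum $0$ for every $\delta > 0$. I iterate: once $p_1, \ldots, p_{k-1}$ have been constructed so that
\[ G_{k-1}(\delta) := \{r > 0 : |s_j(r) - p_j| < \delta \text{ for every } j < k\} \]
satisfies $\inf G_{k-1}(\delta) = 0$ for every $\delta > 0$, I use definable completeness on the bounded definable function $s_k$ restricted to these cofinal sets to produce $p_k \in F$ for which $\inf G_k(\delta) = 0$ as well. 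After $n$ steps, $p := (p_1, \ldots, p_n)$ has the required cluster property, and $p \in X$ because $X$ is closed.

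With $p$ chosen, the hypothesis supplies a constant $c_p > 0$ and an open box $B$ containing $p$ such that $\psi > c_p$ on $W := X \cap B$. The cluster property yields $r < c_p$ with $s(r) \in B$, hence $s(r) \in W$ and $\psi(s(r)) > c_p$; but $\psi(s(r)) < r < c_p$ by the construction of $s$, and this contradiction closes the argument.

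The main obstacle is the inductive construction of the coordinates $p_k$. Defining $p_k$ so that $G_k(\delta)$ remains cofinal at $0$ uniformly in $\delta > 0$---using only definable completeness, with no access to a definable curve selection lemma in this generality---requires a careful quantifier arrangement, perhaps as a limsup over $\delta$ of liminfs of $s_k$ along $G_{k-1}(\delta)$, together with repeated use of definable choice to keep every stage genuinely definable.
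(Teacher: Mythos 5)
Your overall strategy---extract from the failure of the conclusion a family of points on which $\psi$ tends to $0$, locate a cluster point $p\in X$ of that family, and contradict the local lower bound at $p$---matches the paper's in outline. But the step you yourself flag as ``the main obstacle,'' the inductive construction of the coordinates $p_k$, is precisely where the content of the lemma lives, and you do not carry it out; you only gesture at ``a limsup over $\delta$ of liminfs.'' That sketch can be completed: by the induction hypothesis $\inf G_{k-1}(\delta)=0$ for every $\delta>0$, so $\ell(\delta):=\liminf_{r\to 0^+,\, r\in G_{k-1}(\delta)}s_k(r)$ is well defined, definable (all the inner suprema and infima exist by definable completeness), bounded, and nonincreasing in $\delta$; hence $p_k:=\sup_{\delta>0}\ell(\delta)$ exists, and for a given $\delta$ one passes to $\delta'<\delta$ with $p_k-\ell(\delta')<\delta/2$ to find arbitrarily small $r\in G_{k-1}(\delta')\subseteq G_{k-1}(\delta)$ with $|s_k(r)-p_k|<\delta$, giving $\inf G_k(\delta)=0$. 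As written, though, the proposal stops short of this verification, so it is not yet a proof.

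The paper avoids the whole construction: it sets $Z_t=\{x\in X\mid \psi(x)\le t\}$ and $C_t=\mycl(Z_t)$ and invokes the definable-compactness result \cite[Proposition 4.7]{Fuji-compact} (a definable decreasing family of nonempty closed subsets of a bounded closed definable set has nonempty intersection) to obtain $x_0\in\bigcap_{t>0}C_t$ directly; the contradiction at $x_0$ is then the same as yours. Two further points of comparison. First, your selection $s$ relies on definable choice (Proposition \ref{prop:others}(7)), which is a d-minimality result, whereas the lemma is stated for an arbitrary definably complete expansion of an ordered field; working with the closures $C_t$ of the sublevel sets requires no choice and is therefore both more general and shorter. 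Second, if you do complete your coordinatewise liminf argument, you will essentially have reproved the special case of that compactness statement for the family $\{C_t\}_{t>0}$, so proving or citing the compactness lemma once is the more economical route.
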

\begin{proof}
	Assume for contradiction that the set $Z_t=\{x \in X\;|\; \psi(x) \leq t\}$ is not empty for each $t>0$.
	Set $C_t=\mycl(Z_t)$. 
	We can find $x_0 \in \bigcap_{t>0}C_t$ by \cite[Proposition 4.7]{Fuji-compact} because $X$ is bounded and closed.
	There exists an open neighborhood $U$ of $x_0$ in $X$ and $d>0$ such that $\psi>d$ on $U$ because $\psi$ is locally bounded from below by positive constants.
	Take $0<t<d$.
	We can find a point $x \in U$ such that $\psi(x) \leq t$ because $x_0 \in C_t = \mycl(Z_t)$.
	We have obtained a contradiction. 
\end{proof}

%We recall the definition of the Cantor-Bendixson rank here.
%It is known that every definable set of dimension zero has a finite Cantor-Bendixson rank. 
%\begin{definition}[\cite{FM}]\label{def:lpt}
We denote the set of isolated points in $S$ by $\myIso(S)$ for any topological space $S$.
We set $\myLpt(S):=S \setminus \myIso(S)$.
Let $X$ be a nonempty closed subset of a topological space $S$.
We set $X\langle 0 \rangle=X$ and, for any $m>0$, we set $X \langle m \rangle = \myLpt(X \langle m-1 \rangle)$.
We say that $\myrank(X)=m$ if $X \langle m \rangle=\emptyset$ and $X\langle m-1 \rangle \neq \emptyset$.
We say that $\myrank X = \infty$ when $X \langle m \rangle \neq \emptyset$ for every natural number $m$.
%	We set $\myrank(Y):=\myrank(\mycl(Y))$ when $Y$ is a nonempty subset of $S$ which is not necessarily closed.
%	We also set $\myrank(\emptyset)=0$ for convenience.
%\end{definition}

\begin{lemma}\label{lem:bounded_below2}
	%Consider a d-minimal expansion of an ordered field $\mathcal F=(F,<,+,\cdot,0,1,\ldots)$.
	Suppose that $\mathcal F$ is d-minimal.
	Let $M$ be a $\mathcal D^p$ submanifold of $F^n$ and $\psi:M \to F$ is a positive definable function which is locally bounded from below by positive constants.
	Then there exists a positive $\mathcal D^p$ function $\varphi:M \to F$ such that $\varphi \leq \psi$ on $M$. 
\end{lemma}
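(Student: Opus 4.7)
The plan is to reduce to the case where $M$ is closed in $F^N$, encode the possible decay of $\psi$ at infinity in a one-variable profile using the bounded $\mathcal D^p$ coordinate $t = 1/(1+|x|^2)$, and dominate this profile by a $\mathcal D^p$ function built with Proposition \ref{prop:others}(8).

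By Corollary \ref{cor:submanifold} and transport of $\psi$ along a $\mathcal D^p$ diffeomorphism — which preserves both definability and local boundedness from below by positive constants — I may assume $M$ is a closed $\mathcal D^p$ submanifold of $F^N$, and, by composing further with a translation, that $0 \in M$ (the lemma is trivial for $M = \emptyset$). Let $g : F^N \to (0,1]$ be $g(x) = 1/(1+|x|^2)$, where $|x|^2 := \sum_{i=1}^N x_i^2$, and set
\[
\tilde c(t) := \inf\{\psi(y) : y \in M,\ g(y) \geq t\}, \qquad t \in (0,1].
\]
For every $t \in (0,1]$ the set $\{y \in M : g(y) \geq t\} = M \cap \{y : |y|^2 \leq 1/t - 1\}$ is closed, bounded, and contains $0$, so Lemma \ref{lem:bounded_below} yields $\tilde c(t) > 0$. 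Since this set grows as $t$ shrinks, $\tilde c$ is non-increasing, and therefore $h := 1/\tilde c$ is a locally bounded definable function on $(0,1]$.

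Apply Proposition \ref{prop:others}(8) with $A = [0,1]$, $g_A(t) = t$ (whose zero set is $\{0\}$), and the family $\{h\}$: this produces an odd, strictly increasing, surjective $\mathcal D^p$ function $\phi : F \to F$, $p$-flat at $0$, with $\phi(1)=1$ and $\phi(t)/\tilde c(t) \to 0$ as $t \to 0^{+}$. Consequently there is $t_0 \in (0,1]$ with $\phi(t) \leq \tilde c(t)$ on $(0,t_0]$; on $[t_0,1]$ one has $\tilde c(t) \geq \tilde c(1) > 0$ and $\phi(t) \leq \phi(1) = 1$, so $\lambda := \min\{1,\tilde c(1)\} > 0$ gives $\lambda\phi(t) \leq \tilde c(t)$ throughout $(0,1]$.

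Finally set $\varphi(x) := \lambda\phi(g(x))$. This is $\mathcal D^p$ on $M$ as a composition of $\mathcal D^p$ maps, and is positive because $g(x)>0$ while $\phi$ is strictly increasing with $\phi(0)=0$. The desired bound $\varphi(x) \leq \tilde c(g(x)) \leq \psi(x)$ holds because $x$ itself belongs to $\{y \in M : g(y) \geq g(x)\}$. The main obstacle is that the hypothesis on $\psi$ is purely local: $\psi$ can decay arbitrarily fast along sequences escaping to infinity in $M$, so no fixed $\mathcal D^p$ candidate such as $c/(1+|x|^2)$ can in general be made $\leq \psi$ everywhere. Proposition \ref{prop:others}(8) is used precisely to supply a $\phi$ flat enough at $0$ to defeat an arbitrary decay profile $\tilde c$, and the change of variable $t = g(x)$ converts escape to infinity into approach to $0$, which is exactly where this flatness is activated.
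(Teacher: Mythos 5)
Your argument is correct, and it takes a genuinely different route from the paper's. Both proofs start the same way: reduce to a closed submanifold via Corollary \ref{cor:submanifold} and extract a radial decay profile of $\psi$ by taking infima over balls (your $\tilde c$ is essentially the paper's $\mu(u)=\inf\{\psi(x):\|x\|^2\le u\}$ reparametrized by $t=1/(1+|x|^2)$). The divergence is in how the one-variable profile is undercut by a $\mathcal C^p$ function. The paper reduces to the case $M=F$ and then proves the full one-variable statement --- a $\mathcal C^p$ minorant exists below \emph{any} positive definable function on $F$ that is locally bounded below by positive constants --- by an induction on the Cantor--Bendixson rank of the closure of the non-$\mathcal C^p$ locus, smoothing $\psi$ near the isolated points of that locus with bump functions at each stage. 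You bypass that induction entirely: your change of variables compactifies the escape to infinity into the single point $t=0$, so the only obstruction is the possible decay of $\tilde c$ at $0$, and Proposition \ref{prop:others}(8) is tailor-made to produce a globally $\mathcal C^p$, $p$-flat $\phi$ with $\phi(t)/\tilde c(t)\to 0$; monotonicity of $\tilde c$ then gives positivity of the lower bound away from $0$ for free. Your version is shorter and leans harder on the Miller--Thamrongthanyalak flatness lemma, while the paper's version establishes a stronger intermediate fact (the non-monotone one-variable case). One trivial slip to fix: since $\{y\in M: g(y)\ge t\}$ grows as $t$ decreases, $\tilde c$ is non-\emph{decreasing} in $t$, so on $[t_0,1]$ the correct lower bound is $\tilde c(t)\ge\tilde c(t_0)$, not $\tilde c(1)$; take $\lambda=\min\{1,\tilde c(t_0)\}$ and the rest of your argument goes through unchanged.
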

\begin{proof}
	We may assume that $M$ is closed by Corollary \ref{cor:submanifold}.
	We first reduce to the case $M=F$.
	For any positive $u \in F$, we set $M_u=\{x \in M\;|\; \| x  \|^2 \leq u\}$.
	We can take $u_0>0$ so that $M_{u_0} \neq \emptyset$.
	Consider the map $\mu:[u_0,\infty) \rightarrow F$ given by $\mu(u)=\inf\{\psi(x)\;|\; x \in M_u\}$.
	The set $M_u$ is bounded and closed.
	The map $\mu$ is well-defined because $\mathcal F$ is definably complete.
	The map $\mu$ is definable and nonincreasing.
	It is positive by Lemma \ref{lem:bounded_below}.
	Set $\mu(u)=\mu(u_0)$ for every $u<u_0$.
	The definable map $\mu:F \to F$ is nonincreasing.
	In particular, it is locally bounded from below by positive constants.
	Let $\varphi_0:F \to F$ be a positive definable $\mathcal C^p$ function such that $\varphi_0<\mu$ on $F$.
	Consider the positive $\mathcal D^p$ map $\varphi:M \to F$ defined by $\varphi(x)=\varphi_0(\|x\|^2)$.
	We have $\varphi(x)=\varphi_0(\|x\|^2) < \mu(\|x\|^2) \leq \psi(x)$.
	We have succeeded in reducing to the case in which $M=F$.
	
	We consider the case in which $M=F$.
	Let $D$ be the closure of the set of points at which the function $\psi$ is not of class $\mathcal C^p$.
	It is a definable set and has a nonempty interior by Proposition \ref{prop:others}(2),(5).
	The Cantor-Bendixson rank $d=\myrank(D)$ is finite by \cite[Lemma 5.10]{Fornasiero_dmin}.
	We prove the lemma by induction on $d$.
	When $d=0$, the set $D$ is an empty set and we have nothing to prove in this case.
	
	We consider the case $d>0$.
	Set $E=\myIso(D)$.
	The definable set $E$ is discrete.
	Consider the definable map $\rho_0:E \to F$ defined by $\rho_0(x)=\inf \{|y-x|/2\;|\; y \in E, y \neq x \}$.
	We have $\rho_0(x)>0$ for every $x \in E$ because $E$ is discrete.
	We next choose positive definable functions $\rho_1,\tau_0: E \to F$ so that, for each $x \in E$, the inequality $\psi(y) \geq \tau_0(x)$ holds for every $y \in (x-\rho_1(x),x+\rho_1(x))$.
	We can choose such $\rho_1$ and $\tau_0$ by Proposition \ref{prop:others}(7) because $\psi$ is locally bounded from below by positive constants.
	We set $\rho(x)=\min\{\rho_0(x),\rho_1(x)\}$ and $I_x=(x-\rho(x),x+\rho(x))$ for $x \in E$.
	We have $I_x \cap I_{x'}=\emptyset$ whenever $x \neq x'$.
	The set $U=\bigcup_{x \in E} I_x$ is definable and open.
	Set $\tau(y)=\inf\{\psi(x)\;|\; x \in I_y\}$ for every $y \in E$.
	We obviously have $\tau(y) \geq \tau_0(y)$.
	In particular, we have $\tau(y) >0$ for every $y \in E$.
	Let $P:F \to F$ be the $\mathcal D^p$ function given in Lemma \ref{lem:basic_func}.
	We define definable functions $\theta, \sigma:F \to F$ as follows:
	\begin{align*}
		&\theta(x)=\left\{\begin{array}{ll}
			P\left(\dfrac{x-y}{\rho(y)}\right) & \text{ if } x \in I_y,\\
			0 & \text{ if } x \notin U
		\end{array}
		\right.\\
		&\sigma(x)=\left\{\begin{array}{ll}
			\tau(y) \cdot \theta(x) & \text{ if } x \in I_y,\\
			0 & \text{ if } x \notin U
		\end{array}
		\right.
	\end{align*}
	They are $\mathcal D^p$ functions off $D \setminus E$.
	We consider the map $g:F \to F$ given by $$g(x)=(1-\theta(x))\cdot \psi(x)/2+\sigma(x)/2.$$
	Let $D'$ be the closure of the set of points at which $g$ is not of class $\mathcal C^p$.
	The inclusion $D' \subseteq D \setminus E$ holds because $\theta$ and $\sigma$ is of class $\mathcal C^p$ off $D \setminus E$ and $\theta$ is identically one on a neighborhood of $E$.
	We have $$\myrank(D')<d.$$
	The function $\sigma$ is identically zero off $U$ and $0<\sigma \leq \psi$ on $U$.
	The function $\theta$ is zero off $U$ and $0 < \theta \leq 1$ on $U$.
	It is obvious that $0<g \leq \psi$ on $F$.
	
	We next prove that $g$ is locally bounded from below by positive constants.
	For every $x \in F$, we have only to find a positive $c$ such that $f(y) >c$ for every point $y$ in a neighborhood of $x$.
	It is obvious when $g$ is continuous at $x$.
	Therefore, we have only to consider the case in which $x \in D'$.
	We can take a positive constant $c'$ and an open interval $I$ such that $\psi >c'$ on $I$ because $\psi$ is locally bounded from below by positive constants.
	When $x \in D'$, we have $x \notin U$.
	When the endpoint of $I$ is contained in $I_y$ for some $y \in E$, we can shrink $I$ so that $I \cap I_y=\emptyset$ because $x \notin U$.
	By this procedure, we can reduce to the case in which either the equality $I \cap I_y = \emptyset$ or the inclusion $I_y \subseteq I$ holds for every $y \in E$.
	We have 
	\begin{align*}
		g(x) &\geq (1-\theta(x)) \cdot \inf\{\psi(z)\;|\; z \in I_y\}/2 + \tau(y) \cdot \theta(x)/2\\
		&=\tau(y)/2 \geq \inf\{\psi(z)\;|\;z \in I\}/2 >c'/2
	\end{align*}
	for every $x \in I_y \subseteq I$.
	We also have $g(x)=\psi(x)/2 >c'/2$ for every $x \in I \setminus U$.
	We have demonstrated that $g>c'/2$ on $I$ and it means that $g$ is locally bounded from below by positive constants.
	
	Apply the induction hypothesis to $g$.
	There exists a positive definable $\mathcal C^p$ function $\varphi:F \to F$ such that $\varphi \leq g$ on $F$.
	The definable function $\varphi$ is the desired definable $\mathcal C^p$ function. 
\end{proof}

We are now ready to prove Lemma \ref{lem:zero_appro}.

\begin{lemma}\label{lem:zero_appro}
	%Consider a d-minimal expansion of an ordered field $\mathcal F=(F,<,+,\cdot,0,1,\ldots)$.
	Suppose that $\mathcal F$ is d-minimal.
	Let $M$ be a $\mathcal D^p$ submanifold of $F^n$ and $\{D_i\}_{1 \leq i \leq k}$ be a finite family of $\mathcal D^{p-1}$ vector fields on $M$.
	There exists a positive $\mathcal D^p$ function $f:M \to F$ such that 
	$$\left|D_{\alpha} f\right|<\varepsilon$$
	on $M$ for every sequence $\alpha=(\alpha_1,\ldots,\alpha_m)$ of integers in $\{1,\ldots,k\}$ of length $m \leq p$, where $D_{\alpha}f:=D_{\alpha_1}\cdots D_{\alpha_m}f$.
\end{lemma}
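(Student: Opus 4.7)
My plan is to reduce the statement to a coordinate-level derivative estimate via the multi-graph cover, and then construct $f$ as the composition of a ``very flat'' one-variable function supplied by Proposition~\ref{prop:others}(8) with a small positive $\mathcal D^p$ function obtained from Lemma~\ref{lem:bounded_below2}.

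I would first cover $M$ by finitely many $\mathcal D^p$ multi-valued graphs $U_1,\ldots,U_r$ via Lemma~\ref{lem:multi_graph_cover}. In the local coordinates supplied by the projections $\pi_j$, each $D_i$ takes the form $\sum_\ell a_{ij\ell}\,\partial_{x_\ell}$ with definable $\mathcal C^{p-1}$ coefficients, and an easy induction on $|\alpha|$ gives
\[
D_\alpha f\big|_{U_j}=\sum_{|\beta|\le|\alpha|}C_{\alpha,\beta}^{(j)}(x)\,\partial^\beta f(x),
\]
where each $C_{\alpha,\beta}^{(j)}$ is a polynomial in the $a_{ij\ell}$ and their partials of order at most $|\alpha|-1$, and hence a definable continuous function on $U_j$. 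Setting $N(x):=1+\sum_{j,\alpha,\beta}\mathbf 1_{U_j}(x)\,|C_{\alpha,\beta}^{(j)}(x)|$ yields a positive definable function on $M$ that is locally bounded below by positive constants, so the problem reduces to producing a positive $f\in\mathcal D^p(M)$ whose local partials of order at most $p$ satisfy $|\partial^\beta f(x)|<\varepsilon(x)/(K\,N(x))$ in each chart, where $K$ absorbs the combinatorial constants from the display above and from Faà~di~Bruno below.

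To construct $f$, I would apply Lemma~\ref{lem:bounded_below2} to obtain a positive $\mathcal D^p$ function $\eta\colon M\to F$ with $\eta<\varepsilon/(K\,N)$. The local partials $\partial^\beta\eta$ are definable and continuous, but potentially much larger than $\varepsilon/N$, so I would dampen them by precomposition. Applying Proposition~\ref{prop:others}(8) on $A=F$ with $g=\mathrm{id}_F$ and $\mathcal H$ a finite family of locally bounded definable functions on $F\setminus\{0\}$ that majorize the product combinations of the $\partial^\beta\eta$ appearing below (pushed forward through $\eta$ to $F$) yields $\phi\colon F\to F$ that is odd, strictly increasing, surjective, $p$-flat at $0$, with $\phi(1)=1$ and $\phi(t)\,h(t)\to 0$ as $t\to 0$ for every $h\in\mathcal H$. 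I then set $f:=\phi\circ\eta$; this is positive because $\phi$ is odd and strictly increasing with $\phi(0)=0$ and $\eta>0$.

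The main obstacle is verifying the derivative bound $|\partial^\beta f|<\varepsilon/(K\,N)$ for all $|\beta|\le p$. Faà~di~Bruno expresses $\partial^\beta f$ as a finite integer-coefficient combination of terms of the form $\phi^{(j)}(\eta)\cdot\partial^{\gamma_1}\eta\cdots\partial^{\gamma_s}\eta$ with $\gamma_1+\cdots+\gamma_s=\beta$ and $j\le s$, and the flatness of $\phi$ supplied by Proposition~\ref{prop:others}(8) has to be arranged so as to exactly beat the products of $\partial^{\gamma_i}\eta$ encoded in $\mathcal H$. The delicate step is organizing $\mathcal H$ so that its members are genuinely locally bounded on $F\setminus\{0\}$ (as the hypotheses of Proposition~\ref{prop:others}(8) require), which is precisely where the continuity of the coefficients $C_{\alpha,\beta}^{(j)}$ and the finiteness of the multi-graph cover are used. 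Once the derivative bounds in local coordinates are established, combining them with the expansion from the first step yields $|D_\alpha f|<\varepsilon$ on all of $M$.
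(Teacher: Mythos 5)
Your overall strategy --- flatten a small positive function from Lemma~\ref{lem:bounded_below2} by post-composing with the reparametrization of Proposition~\ref{prop:others}(8) --- is in the right spirit, but the specific construction $f=\phi\circ\eta$ has a gap I do not think can be repaired in this form. Proposition~\ref{prop:others}(8) controls only the products $\phi(g(x))\,h(x)$, i.e.\ the \emph{undifferentiated} $\phi$; it gives no quantitative control over $\phi'(g(x))h(x),\ldots,\phi^{(p)}(g(x))h(x)$ ($p$-flatness at $0$ is a statement about the values $\phi^{(j)}(0)$ and carries no rate). Yet Fa\`a di Bruno forces you to bound terms $\phi^{(j)}(\eta(x))\cdot\partial^{\gamma_1}\eta(x)\cdots\partial^{\gamma_s}\eta(x)$ with $j\ge 1$, and these are exactly the terms you cannot control: the partials of $\eta$ coming out of Lemma~\ref{lem:bounded_below2} are completely unconstrained. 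The paper's proof circumvents this by working with $\theta=(\zeta(h(\varphi(x))))^{p+1}$: the $(p{+}1)$-st power guarantees that every term of every derivative of order $\le p$ still carries an undifferentiated factor $\zeta(h(\cdot))$ (the factors $\zeta^{(j)}(h(\cdot))$ being merely bounded on the compact closure), which is precisely the quantity Proposition~\ref{prop:others}(8) can kill. Your construction has no analogous device.

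Two further problems. First, your family $\mathcal H$ is obtained by ``pushing forward through $\eta$''; since $\eta$ is not proper and its level sets can be unbounded, the resulting functions on $F\setminus\{0\}$ need not be finite-valued, let alone locally bounded as the hypothesis of Proposition~\ref{prop:others}(8) requires. Even where this works, the conclusion is only a limit as $t\to 0$, so you obtain the desired estimate only near points of $M$ where $\eta$ is small; at the remaining points nothing forces $|\partial^\beta(\phi\circ\eta)|<\varepsilon/(KN)$. The paper resolves both issues structurally: it first transports $M$ into a bounded box, takes the argument of $\zeta$ to be a function $h$ of the \emph{ambient} variable vanishing exactly on the compact frontier $\partial U$ (so $\mathcal H$ consists of genuinely continuous, hence locally bounded, functions on $U$), and obtains the global bound by compactness --- the bad set $C$ is closed and bounded, the Max--min theorem yields a uniform $N$, and a final division by $2N$ finishes; it multiplies by $\varepsilon$ rather than trying to stay below $\varepsilon$ pointwise from the outset. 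Second, a smaller issue in your reduction: $N(x)=1+\sum_{j,\alpha,\beta}\mathbf 1_{U_j}(x)\,|C^{(j)}_{\alpha,\beta}(x)|$ need not be locally bounded above near the frontier of a chart $U_j$ inside $M$ (the chart coefficients of the $D_i$ can blow up there as $\pi_j|_{T_xM}$ degenerates), so $\varepsilon/(KN)$ need not be locally bounded below by positive constants, which is exactly the hypothesis you need in order to invoke Lemma~\ref{lem:bounded_below2}.
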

\begin{proof}
	We may assume that $\varepsilon$ is of class $\mathcal C^p$ without loss of generality by applying Lemma \ref{lem:bounded_below2} to $\varepsilon$.
	
	Let $B$ be the open box in $F^n$ given by $B:=\{x=(x_1,\ldots,x_n) \in F^n\;|\; |x_i| <1 \text{ for each } 1 \leq i \leq n\}$.
	Let $\varphi:F^n \to B$ be the  $\mathcal D^p$ diffeomorphism given by $\varphi(x_1,\ldots,x_n)=(\frac{x_1}{\sqrt{1+x_1^2}}, \ldots, \frac{x_n}{\sqrt{1+x_n^2}})$.
	The inverse map $\psi:B \to F^n$ of $\varphi$ is given by $\psi(x_1,\ldots,x_n)=(\frac{x_1}{\sqrt{1-x_1^2}}, \ldots, \frac{x_n}{\sqrt{1-x_n^2}})$.
	%Set $U=\varphi(M)$ and $\varepsilon':=\varepsilon \circ \psi:V \to F$.
	Set $U=\varphi(M)$.
	
	The frontier $\partial U$ of $U$ in $F^n$ is bounded and closed.
	We can find a definable $\mathcal C^p$ function $h:F^n \to F$ whose zero set is $\partial U$ by Proposition \ref{prop:others}(1).
	We may assume that $0 \leq h <1$ by considering $h^2/(1+h^2)$ instead of $h$.
	We consider that the domain of definition of $h$ is $\mycl(U)$ by considering the restriction of $h$ to $\mycl(U)$  instead of $h$.
	%We may consider that $\varepsilon$ is defined and continuous on $F^n \setminus \partial U$ by setting $\varepsilon=1$ on $F^n \setminus \mycl(U)$.
	
	We define $\mathcal I$ and $\mathcal J$ as follows:
	\begin{align*}
		\mathcal I&=\{\text{sequences of integers in }\{0,1,\ldots,k\} \text{ of length} \leq p\},\\
		\mathcal J&=\{\text{sequences of integers in }\{1,\ldots,k\} \text{ of length} \leq p\}.
	\end{align*}
	We denote the length of a sequence of $\alpha \in \mathcal I$ by $\mylen(\alpha)$. 
	For every $\alpha \in \mathcal J$ and $\beta \in \mathcal I$, we write $\beta \prec \alpha$ if $\mylen(\alpha)=\mylen(\beta)$ and either $\alpha_i=\beta_i$ or $\beta_i=0$, where $\alpha_i$ and $\beta_i$ are the $i$-th element of $\alpha$ and $\beta$, respectively.
	For $\alpha \in \mathcal J$ and $\beta \in \mathcal I$ with $\beta \prec \alpha$, $\alpha \setminus \beta$  denotes the unique element $\gamma$ in $\mathcal I$ such that $\gamma \prec \alpha$ and the $i$-th element of $\gamma$ is zero if and only if $\beta_i \neq 0$ for every $1 \leq i \leq \mylen(\alpha)$.
	We set $D_0f = f$ for every definable function $f$ on $M$ for simplicity of notations. 
	
	Set $\theta(x)=(\zeta(h(\varphi(x))))^{p+1}$ for every $x \in M$, where $\zeta:F \to F$ is a $\mathcal D^p$ function which will be determined later.
	By chain rule and Leibnitz rule, for every $\beta \in \mathcal I$, we can find definable continuous functions $\rho_{\beta,i_0,i_1,\ldots,i_p}:M \to F$ for $1 \leq i_0 \leq p+1$ and $0 \leq i_1,\ldots, i_p \leq p+1$ such that they are independent of the choice of $\zeta$ and the equality
	\begin{equation}
		D_{\beta}\theta(x)=\sum_{i_0=1}^{p+1}\sum_{i_1=0}^{p+1}\cdots\sum_{i_p=0}^{p+1} \rho_{\beta,i_0,i_1,\ldots,i_p}(x)(\zeta(h(\varphi(x))))^{i_0}\cdots (\zeta^{(p)}(h(\varphi(x))))^{i_p}\label{eq:aaa0}
	\end{equation}
	holds.
	Set $\mathcal G=\{\rho_{\beta,i_0,i_1,\ldots,i_p}(x)\;|\; \beta \in \mathcal I, 1 \leq i_0 \leq p+1, 0 \leq i_1,\ldots, i_p \leq p+1\}.$
	
	Consider the finite family of definable continuous functions $$\mathcal H=\displaystyle\left\{\dfrac{1}{\varepsilon(\psi(x))} \cdot \left(D_{\alpha}\varepsilon\circ \psi(x)\right)\cdot \eta(\psi(x))\;\middle|\; \alpha \in \mathcal I, \eta \in \mathcal G
	\right\}.$$
	Every element in $\mathcal H$ is a definable continuous function on $U$.
	In particular, it is locally bounded. 
	Apply Proposition \ref{prop:others}(8) to $h$ and $\mathcal H$.
	We can find a strictly increasing $\mathcal D^p$ function $\zeta:F \to F$ such that $\zeta(0)=0$, $\zeta(1)=1$, $p$-flat and 
	\begin{equation}
		\lim_{x \to y \in \partial U} \dfrac{\zeta(h(x))}{\varepsilon(\psi(x))} \cdot \left(D_{\alpha}\varepsilon\circ \psi(x)\right)\cdot \eta(\psi(x))=0 \label{eq:aaa1}
	\end{equation}
	for each $\alpha \in \mathcal I$ and $\eta \in \mathcal G$.
	Observe that the inequality $0 \leq \zeta(h(x)) <1$ holds for every $x \in U$.
	%The function $\theta: F^n \to F$ is a definable $\mathcal C^p$ function.
	We define a positive $\mathcal D^p$ function $g:M \to F$ by $g(x)=\theta(x) \cdot \varepsilon(x)$.
	
	Fix an $\alpha \in \mathcal I$. 
	We have
	\begin{equation}
		D_{\alpha}g(x) =\sum_{\beta \prec \alpha}D_{\beta}\theta(x) \cdot D_{\alpha\setminus\beta}\varepsilon(x).\label{eq:aaa2}
	\end{equation}
	%Observe that $D_{\beta}\theta(x)$ is $\zeta(h(\varphi(x)))$ multiplied by a definable continuous function defined on $M$.
	By equalities (\ref{eq:aaa0}), (\ref{eq:aaa1}) and (\ref{eq:aaa2}) and the condition that $\zeta$ is $p$-flat, we have
	\begin{equation}
		\lim_{x \to y \in \partial U} \dfrac{1}{\varepsilon(\psi(x))} \cdot (D_{\alpha}g) \circ \psi(x) =0. \label{eq:aaa3}
	\end{equation} 
	
	Set $C=\left\{x \in U\;\middle|\;\left|(D_{\alpha}g) \circ (\psi(x))\right| \geq \varepsilon(\psi(x)) \text{ for some }\alpha \in \mathcal I \right\}.$
	The definable set $C$ is closed in $U$ and $C$ does not intersect with a neighborhood of $\partial U$ in $F^n$ by equality (\ref{eq:aaa3}).
	It implies that $C$ is closed in $F^n$.
	Observe that $\dfrac{(D_{\alpha}g) \circ (\psi(x))}{\varepsilon(\psi(x))}  $ is continuous on $C$ for every $\alpha \in \mathcal I$.
	We can find $N>0$ such that $\left|\dfrac{(D_{\alpha}g) \circ (\psi(x))}{\varepsilon(\psi(x))}   \right|<N$ on $C$ by \cite[Corollary(Max-min theorem)]{M}.
	Let $h:M \to F$ be the positive $\mathcal D^p$ function given by $h(x)=g(x)/(2N)$.
	We have $\left|D_{\alpha} h\right|<\varepsilon \circ \psi|_U$ on $U$.
	The function $f$ defined by $f=h \circ \varphi|_M$ is a desired function.
\end{proof}

%As a final preparation, we recall $\mathcal D^p$ topology defined in \cite{E}.
%\begin{definition}
%%Let $\mathcal F=(F,<,+,\cdot, 0,1,\ldots)$ be a definably complete expansion of an ordered field.
%Let $M$ be a $\mathcal D^p$ submanifold of $F^n$ and $\mathcal D^p(M)$ be the set of $\mathcal D^p$ functions on $M$.
%The topology on $\mathcal D^p(M)$ defined below is called the $\mathcal D^p$ topology. 
%Let $D_1,\ldots, D_m$ be a $\mathcal D^{p-1}$ vector fields on $M$ which spans the tangent space at each point of $M$.
%For each $h \in \mathcal D^p(M)$ and each positive definable continuous function $\varepsilon:M \to F$, consider the sets $\mathcal V_{h,\varepsilon}$ of the form 
%$$\mathcal V_{h,\varepsilon}=\{g \in \mathcal D^p(M)\;|\; |D_{i_1}\cdots D_{i_j}(g-h)|< \varepsilon \text{ for } 1 \leq i_1,\ldots, i_j \leq m, j \leq p\}.$$
%The sets $\{\mathcal V_{h,\varepsilon}\}$ forms a neighborhood basis of $\mathcal D^p(M)$ at $h$.
%\end{definition}

The following is our main theorem.
The $\mathcal D^p$ topology in the theorem is defined in \cite{E}. 
\begin{theorem}\label{thm:Morse}
	Let $p$ be a positive integer larger than one.
	Suppose that $\mathcal F$ is d-minimal.
	Let $M$ be a $\mathcal D^p$ submanifold of $F^n$.
	The set of all $\mathcal D^p$ Morse functions on $M$ is dense in $\mathcal D^p(M)$ equipped with the $\mathcal D^p$ topology.
\end{theorem}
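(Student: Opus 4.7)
The plan is to adapt the classical Morse-density argument to the d-minimal setting by covering $M$ with finitely many multi-valued graphs, applying the local perturbation result Lemma \ref{lem:Morse_basic} on each piece, and gluing via $p$-flat cutoff functions. Given a basic $\mathcal D^p$-neighborhood $\mathcal N$ of $f\in\mathcal D^p(M)$, Lemma \ref{lem:multi_graph_cover} supplies a finite cover by $\mathcal D^p$ multi-valued graphs $U_1,\ldots,U_k$. A standard shrinking built from Proposition \ref{prop:others}(1) yields nested open refinements $W_i\subseteq V_i\subseteq U_i$ with $\mycl(W_i)\subseteq V_i$, $\mycl(V_i)\subseteq U_i$ (relatively in $M$), and $\bigcup_{i=1}^k W_i=M$. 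For each $i$ I would construct $\rho_i:M\to F$ of the form $\rho_i=\sigma_i^{p+1}\mu_i$, where $\sigma_i$ is a $\mathcal D^p$ function whose zero set in $M$ is $\partial V_i$ (Proposition \ref{prop:others}(1)) and $\mu_i>0$ is a positive $\mathcal D^p$ function to be specified later. Then $\rho_i>0$ on $V_i$, $\rho_i\equiv 0$ on $M\setminus V_i$, and every partial derivative of $\rho_i$ up to order $p$ vanishes on $\partial V_i$.

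Set $f_0:=f$. Given $f_{i-1}\in\mathcal N$ that is Morse on $K_{i-1}:=\mycl(W_1)\cup\cdots\cup\mycl(W_{i-1})$ and equals $f$ off $V_1\cup\cdots\cup V_{i-1}$, apply Lemma \ref{lem:Morse_basic} to the multi-valued graph $V_i$ (open in $U_i$, hence still a multi-graph for the same projection) with data $f_{i-1}|_{V_i}$, $\varphi:=\rho_i|_{V_i}$ (nonzero on $V_i$), and $D$ equal to the set of critical values of $f_{i-1}$ on $K_{i-1}\setminus V_i$; this $D$ is zero-dimensional by Corollary \ref{cor:nondegenerate}. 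The lemma yields constants $a_0^{(i)},\ldots,a_d^{(i)}\in F$ of arbitrarily small absolute value making
\[
f_i:=f_{i-1}+\Bigl(a_0^{(i)}+\sum_{j=1}^d a_j^{(i)}x_j\Bigr)\rho_i
\]
Morse on $V_i$, with new critical values avoiding $D$. The $p$-flatness of $\rho_i$ at $\partial V_i$ ensures $f_i\in\mathcal D^p(M)$ (the perturbation is already zero off $V_i$) and, since $p\geq 2$, forces $D_\alpha f_i(x)=D_\alpha f_{i-1}(x)$ for $|\alpha|\leq 2$ at every $x\in\partial V_i$; hence nondegeneracy at points of $K_{i-1}\cap\partial V_i$ is inherited from $f_{i-1}$. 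Combined with Lemma \ref{lem:Morse_basic} on $V_i$, the identity $f_i\equiv f_{i-1}$ on $M\setminus\mycl(V_i)$, and the choice of $D$ securing distinctness of critical values, $f_i$ is Morse on $K_i:=K_{i-1}\cup\mycl(W_i)$.

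The central obstacle is that $K_{i-1}$ is closed but typically non-compact, so the bare clause ``$|a_j^{(i)}|$ sufficiently small'' in Lemma \ref{lem:Morse_basic} does not by itself keep $f_i$ in $\mathcal N$: the factors $x_j\rho_i$ may be unbounded and, worse, their higher derivatives lack any uniform $\mathcal C^p$-control on $M$. This is exactly what Lemma \ref{lem:zero_appro} is crafted for: before invoking Lemma \ref{lem:Morse_basic}, I would apply Lemma \ref{lem:zero_appro} to the coordinate vector fields $\partial/\partial x_1,\ldots,\partial/\partial x_n$ on $M$ and absorb into $\mu_i$ a sufficiently small positive $\mathcal D^p$ factor, so that every $D_\alpha\rho_i$ and every $D_\alpha(x_j\rho_i)$ of order $|\alpha|\leq p$ is pointwise dominated by a preassigned positive $\mathcal D^p$ function tuned to the defining data of $\mathcal N$. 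With $\rho_i$ thus prepared, any subsequent choice of small $|a_j^{(i)}|$ translates into a global $\mathcal D^p$-topology bound on $f_i-f_{i-1}$, so the invariant $f_i\in\mathcal N$ propagates through the induction. After $k$ steps, $f_k\in\mathcal N$ is a $\mathcal D^p$ Morse function on $M=\bigcup_{i=1}^k W_i$, establishing density.
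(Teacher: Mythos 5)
Your proof is correct and follows essentially the same route as the paper's: a finite cover by $\mathcal D^p$ multi-valued graphs (Lemma \ref{lem:multi_graph_cover}), an inductive perturbation by $(a_0+\sum_{j=1}^d a_jx_j)$ times a cutoff on each piece using Lemma \ref{lem:Morse_basic} together with Corollary \ref{cor:nondegenerate} for the exceptional value set, and Lemma \ref{lem:zero_appro} to keep the perturbation inside the prescribed $\mathcal D^p$-neighborhood. Your only deviation is the shrunken cover $W_i\subseteq V_i\subseteq U_i$ with an explicitly $p$-flat cutoff $\sigma_i^{p+1}\mu_i$, where the paper simply takes $\lambda_i$ vanishing exactly on $M\setminus U_i$; this is a minor (and arguably more careful, since it makes the inheritance of nondegeneracy at frontier points explicit) variant rather than a different approach.
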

\begin{proof}
	We take arbitrary $h \in \mathcal D^p(M)$ and arbitrary positive definable continuous function $\varepsilon$ defined on $M$.
	Consider the set
	$$\mathcal W_{h,\varepsilon}=\{g \in \mathcal D^p(M)\;|\; |D_{i_1}\cdots D_{i_j}(g-h)|< \varepsilon \text{ for } 1 \leq i_1,\ldots, i_j \leq n, j \leq p\}.$$
	We have only to construct a $\mathcal D^p$ Morse function $g$ in $\mathcal W_{h,\varepsilon}$.
	
	Set $d=\dim M$.
	The $\mathcal D^p$ submanifold  $M$ is covered by finitely many $\mathcal D^p$ multi-valued graphs $U_1,\ldots, U_k$ by Lemma \ref{lem:multi_graph_cover}.
	Let $U_i$ be a $\mathcal D^p$ multi-valued graph with respect to a coordinate projection $\pi_i:F^n \to F^d$ for $1 \leq i \leq k$.
	We can take a $\mathcal D^p$ function $\lambda_i:M \to F$ with $\lambda_i^{-1}(0)=M \setminus U_i$ by Proposition \ref{prop:others}(1).
	Set $\varepsilon'=\varepsilon/k$ and $g_0=h$.
	We also put $V_i=\bigcup_{j=1}^i U_j$ and $V_0=\emptyset$.
	
	By induction on $i$, we have only to construct a $\mathcal D^p$ function $g_i:M \to F$ such that $g_i \in \mathcal W_{g_{i-1},\varepsilon'}$ and  the restriction $g_i|_{V_i}$ is a $\mathcal D^p$ Morse function.
	We may assume that $\pi_i$ is the coordinate projection of $F^n$ onto the first $d$ coordinates by permuting the coordinates if necessary.
	Let $D_i$ be the set of all critical values of the restriction $g_{i-1}|_{V_{i-1}}$.
	We have $\dim D_i \leq 0$ by Corollary \ref{cor:nondegenerate}.
	We consider the definable function $g_i:M \to F$ given by  $$g_i(x_1,\ldots,x_n) =g_{i-1}(x_1,\ldots,x_n)+\lambda_i(x)(a_0\varphi(x)+\sum_{j=1}^d a_jx_j\varphi(x)),$$ where $\varphi$ and $a_0, \ldots, a_d$ will be determined later.
	
	We set $D_0f=f$ for every definable function $f$ on $M$.
	We also set $x_0=1$ for simplicity of notations.
	We define $\mathcal I$ and $\mathcal J$, and use the notations such as $\beta \prec \alpha$ in the same manner as the proof of Lemma \ref{lem:zero_appro}.
	We want to choose $\varphi \in \mathcal D^p(M)$ so that the inequality $\left|D_{\alpha} (g_i-g_{i-1})\right|<\varepsilon$ holds for every $\alpha \in \mathcal J$ whenever we take $a_0,a_1,\ldots,a_d$ with $|a_i|< 1/(2^p(d+1))$ for each $0 \leq i \leq d$.
	Set $\mathcal H:=\{D_{\alpha}(\lambda x_j)\;|\; 0 \leq j \leq d,\ \alpha \in \mathcal I\} $. 
	Let $u:M \to F$ be the definable continuous function defined by $u(x)=\max\{1,\max\{h(x)\;|\; h \in \mathcal H\}\}.$
	Using Lemma \ref{lem:zero_appro}, we can take $\varphi \in \mathcal D^p(M)$ so that the inequality $|D_{\alpha}\varphi|< \varepsilon/u$ holds on $M$ for each $\alpha \in \mathcal I$.
	For every $\alpha \in \mathcal J$, we have
	\begin{align*}
		\left|D_{\alpha} (g_i-g_{i-1})\right| &\leq \sum_{j=0}^d\sum_{\beta \prec \alpha}|a_j| \cdot |D_{\beta}(\lambda x_j)| \cdot |D_{\alpha\setminus\beta}\varphi|\\
		& < \sum_{j=0}^d\sum_{\beta \prec \alpha}(1/(2^p(d+1))) \cdot u \cdot \varepsilon/u = \varepsilon.
	\end{align*}
	We have successfully chosen $\varphi$ so that the inequality $\left|D_{\alpha} (\Phi-f)\right|<\varepsilon$ holds.
	
	We next choose $a_0,a_1,\ldots,a_d \in F$. 
	We can take $a_0,a_1,\ldots,a_d$ so that $|a_i|< 1/(2^p(d+1))$ for each $0 \leq i \leq d$ and $g_i|_{U_i}$ is a Morse function and every element in $D_i$ is a regular value of $g_i|_{U_i}$ by Lemma \ref{lem:Morse_basic}.
	Since $g_i$ coincides with $g_{i-1}$ off $U_i$, the restriction $g_i|_{V_i}$ is a $\mathcal D^p$ Morse function. 
\end{proof}

The set of all $\mathcal D^p$ Morse functions on $F$ is not necessarily open in $\mathcal D^p(F)$.
The structure $\mathcal F=(\mathbb R,<,+,\cdot,2^{\mathbb Z})$ is d-minimal \cite{D2}.
We construct a definable More function $f:\mathbb R \to \mathbb R$ whose arbitrary neighborhood in $\mathcal D^2(\mathbb R)$ contains a non-Morse function.

Let $\eta_\delta:\mathbb R \to \mathbb R$ be a semialgebraic $\mathcal C^2$ function such that 
\begin{itemize}
	\item $\eta_\delta$ is identically zero on $(-\infty,-2\delta) \cup (2\delta,\infty)$;
	\item $\eta_\delta$ is identically one on $(-\delta,\delta)$;
	\item $\eta_\delta$ is strictly increasing on $[-2\delta,-\delta]$;
	\item $\eta_\delta$ is strictly decreasing on $[\delta,2\delta]$,
\end{itemize}
where $\delta$ is a positive real number.
Let $\varphi:\mathbb R \to \mathbb R$ be the definable function given by $\varphi(x)=(1-x^2)\eta_\delta(x)+(1/2-(x-1)^2/8)\eta_\delta(x)+Nx^3(x-1)^3$.
Fix a sufficiently small $\delta>0$ and a sufficiently large $N>0$.
We can easily show that the restriction of $\varphi$ to a small neighborhood of $[0,1]$ is a Morse function having critical points only at $x=0,1/2,1$ and $\varphi(0)=1,\varphi''(0)=-1,\varphi(1/2)<0, \varphi(1)=1/2,\varphi''(1)=-1/8$.

Consider the function $\psi$ defined on a small neighborhood $[2,\infty)$ given by $\psi(x)=2^{-n}\cdot \varphi(2^{-n}(x-2^n))$ on $[2^n,2^{n+1}]$ for every $n \in \mathbb N$.
This function is a definable Morse function and has critical points only at $x=2^n$ and $x=3 \cdot 2^{n-1}$ for $n \in \mathbb N$.
Using this, we can construct a definable Morse function $f:\mathbb R \to \mathbb R$ such that $f$ has critical points only at $x=0$, $x=2^n$ and $x=3 \cdot 2^{n-1}$, and $f(0)=0$, $f(2^n)=2^n$ and $f(3 \cdot 2^{n-1})<0$ for $n \in \mathbb N$.
Take an arbitrary positive definable continuous function $\varepsilon:\mathbb R \to \mathbb R$.
Let $g:\mathbb R \to \mathbb R$ be a nonnegative semialgebraic $\mathcal C^2$ function such that $g(0)=1$, $g^{-1}(0) \subseteq (-\infty,-1] \cup [1,\infty)$, $g''(0) \neq 0$ and $g'$ vanishes only at $x=0$ in $(-1,1)$.
We can take $n \in \mathbb N$ so that $|2^{-n}g^{(p)}|<\varepsilon$ for $0 \leq p \leq 2$ because $g$ is identically zero outside a compact set.
The function $f+2^{-n}g$ is contained in the $\varepsilon$-neighborhood of $f$ in $\mathcal D^2(\mathbb R)$, but it has a common critical value at $x=0$ and $x=2^{n}$.
%
%\begin{remark}
%	Loi proved an o-minimal version of Thom's transversailty theorem in \cite{Loi3}.
%	The main tools used for the proof are definable Sard's theorem, existence of very small definable function corresponding to Lemma \ref{lem:zero_appro} and the existence of $\mathcal D^p$ tubular neighborhoods of $\mathcal D^p$ submanifolds.
%	They seem to be available in d-minimal expansions of ordered fields.
%	We will prove a d-minimal version of Thom's transversailty theorem in another note.
%\end{remark}

\end{document}